\let\@@pmod\pmod
\DeclareRobustCommand{\pmod}{\@ifstar\@pmods\@@pmod}
\def\@pmods#1{\mkern4mu({\operator@font mod}\mkern 6mu#1)}
\newtheorem{theorem}{Theorem}
\newtheorem{proposition}[theorem]{Proposition}
\newtheorem{lemma}[theorem]{Lemma}
\theoremstyle{remark}
\newtheorem*{remarks}{Remarks}
\numberwithin{theorem}{section}
\numberwithin{equation}{section}
\newcommand{\Z}{\mathbb{Z}}
\newcommand{\Q}{\mathbb{Q}}
\newcommand{\R}{\mathbb{R}}
\newcommand{\C}{\mathbb{C}}
\renewcommand{\H}{\mathbb{H}}
\DeclareMathOperator{\sgn}{sgn}
\newcommand{\Res}[1]{\underset{#1}{\text{Res}}\,}
\newcommand{\Ns}{N^s}
\begin{document}
\title{Quantitative estimates for simple zeros of $L$-functions}
\author{Andrew R. Booker}
\address{School of Mathematics, University of Bristol, Bristol, BS8 1TW, UK}
\email{andrew.booker@bristol.ac.uk}
\author{Micah B. Milinovich}
\address{Department of Mathematics, University of Mississippi,
University, MS 38677 USA}
\email{mbmilino@olemiss.edu}
\author{Nathan Ng}
\address{Department of Mathematics and Computer Science, University of
Lethbridge, Lethbridge, AB Canada T1K 3M4}
\email{nathan.ng@uleth.ca}

\thanks{Research of the first author was supported by EPSRC Grant
\texttt{EP/K034383/1}. Research of the second author was supported
by the NSA Young Investigator Grants \texttt{H98230-15-1-0231} and
\texttt{H98230-16-1-0311}. Research of the third author was supported
by NSERC Discovery Grant (RGPIN- 2015-05972). No data were created in the course of
this study.}

\begin{abstract}
We generalize a method of Conrey and Ghosh \cite{CG88} to prove quantitative 
estimates for simple zeros of modular form $L$-functions of arbitrary 
conductor. 
\end{abstract}

\subjclass[2010]{Primary 11F66, 11F11, 11M41}

\maketitle

\section{Introduction}
Let $f\in S_k(\Gamma_1(N))$ be a classical holomorphic modular form of
weight $k$ and level $N$. Assume that $f$ is \emph{primitive}, meaning
that it is a normalized Hecke eigenform in the new subspace. Then it has
a Fourier expansion of the shape
$$
f(z)=\sum_{n=1}^\infty\lambda_f(n)n^{\frac{k-1}2}e^{2\pi inz},
$$
where the $\lambda_f(n)$ are multiplicative and satisfy the Ramanujan
bound $|\lambda_f(n)|\le d(n)$. Let
$\Lambda_f(s)=\Gamma_\C(s+\tfrac{k-1}2)L_f(s)$
denote the complete $L$-function of $f$, with analytic normalization,
where
$$
\Gamma_\C(s)=2(2\pi)^{-s}\Gamma(s)
\quad\text{and}\quad
L_f(s)=\sum_{n=1}^\infty\frac{\lambda_f(n)}{n^s},
$$
and let
$$
\Ns_f(T)=\#\bigl\{\rho\in\C:\Lambda_f(\rho)=0, \Lambda_f'(\rho)\ne0,
|\Im(\rho)|\le T\bigr\}
$$
be the number of simple zeros of $\Lambda_f(s)$ with imaginary part in
$[-T,T]$.

In \cite{MN14}, the second and third authors showed that if $\Lambda_f(s)$
satisfies the Generalized Riemann Hypothesis, then
$$
\Ns_f(T)\ge T(\log{T})^{-\varepsilon}
$$
for any fixed $\varepsilon>0$ and all sufficiently large $T>0$.
Unconditionally,
when $N=1$ and $k=12$, Conrey and Ghosh \cite{CG88}
showed that
\begin{equation}\label{eq:cgestimate}
\forall\varepsilon>0,
\exists T\ge\varepsilon^{-1}\text{ such that }
\Ns_f(T)\ge T^{\frac16-\varepsilon}.
\end{equation}
Moreover, their proof works more generally for $N=1$ and arbitrary $k$,
provided that $\Ns_f(T)$ is not identically $0$.  In light of the first
author's result \cite{Boo16} that $\Ns_f(T)\to\infty$ as $T\to\infty$,
\eqref{eq:cgestimate} holds for all primitive $f$ of
conductor $1$.

In this paper we aim to prove similar unconditional quantitative estimates
of simple zeros for primitive
forms of arbitrary conductor $N$. However, we encounter some obstacles
that are reminiscent of the well-known difficulty of extending Hecke's
converse theorem to arbitrary conductor, and are not present for $N=1$.
Taking inspiration from Weil's generalization \cite{Wei67} of Hecke's
converse theorem, we consider character twists. For a Dirichlet character
$\chi\pmod*{q}$, let $f\otimes\chi$ denote the unique primitive form such
that $\lambda_{f\otimes\chi}(n)=\lambda_f(n)\chi(n)$ for all $n$ coprime
to $q$.
\begin{theorem}\label{thm:twist}
Let $f\in S_k(\Gamma_1(N))$ be a primitive form.
Then there is a Dirichlet character $\chi$ such that
\eqref{eq:cgestimate} holds with $f\otimes\chi$ in place of $f$.
\end{theorem}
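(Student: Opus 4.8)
The plan is to generalize the argument of Conrey and Ghosh \cite{CG88}, choosing the twist at the outset so as to supply an analytic input which, for $N>1$, cannot be extracted from the single functional equation of $\Lambda_f$.

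\emph{The Conrey--Ghosh mechanism for a primitive form $g$.} Let $q$ be the conductor of $g$ and let $c_g(n)$ be the Dirichlet coefficients of $1/L_g(s)$; as reciprocal coefficients of a degree-$2$ Euler product, $|c_g(n)|\ll_\varepsilon n^\varepsilon$. For $x$ large put
\[
I_g(x)=\frac{1}{2\pi i}\int_{(2)}\frac{\Gamma_\C(s)\,x^s}{\Lambda_g(s)}\,ds .
\]
Since $\Gamma_\C(s)/\Gamma_\C(s+\tfrac{k-1}{2})$ decays like $|\Im s|^{-(k-1)/2}$ on vertical lines, the integral converges absolutely once $k\ge4$ (for small $k$ one multiplies the integrand by $\Gamma_\C(s+\eta)/\Gamma_\C(\eta)$ and lets $\eta\downarrow0$ afterwards). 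On $\Re s=2$ one expands $1/L_g(s)=\sum_n c_g(n)n^{-s}$ and evaluates the resulting Mellin--Barnes integrals by the Ces\`aro--Perron identity $\frac{1}{2\pi i}\int_{(c)}\frac{\Gamma(s)}{\Gamma(s+b)}z^s\,ds=\frac{1}{\Gamma(b)}(1-z^{-1})^{b-1}$ for $z>1$ (and $=0$ for $z<1$), obtaining $I_g(x)=\frac{(2\pi)^{(k-1)/2}}{\Gamma((k-1)/2)}\sum_{n<x}c_g(n)(1-n/x)^{(k-3)/2}$. On the other hand, moving the contour off to $-\infty$ ---legitimate, since the functional equation $\Lambda_g(s)=\varepsilon_g q^{1/2-s}\Lambda_{\overline g}(1-s)$ forces the integrand to be super-exponentially small on far-left vertical lines--- collects the residue $2/\Lambda_g(0)$ of the pole of $\Gamma_\C(s)$ at $s=0$ (note $\Lambda_g(0)\ne0$), a negligible convergent tail from the remaining poles of $\Gamma_\C$, and the residues at the zeros $\rho$ of $\Lambda_g$, a simple zero contributing the clean term $\Gamma_\C(\rho)x^\rho/\Lambda_g'(\rho)$; here $|\Gamma_\C(\rho)/\Lambda_g'(\rho)|\ll_k|\Im\rho|^{-(k-1)/2}/|L_g'(\rho)|$.

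\emph{Extracting the estimate.} Because the zeros of $L_g$ are symmetric about $\Re s=\tfrac12$, the abscissa of convergence of $\sum_n c_g(n)n^{-s}$ is at least $\sup\{\Re\rho:L_g(\rho)=0\}\ge\tfrac12$, so a Landau-type $\Omega$-argument gives $|I_g(x)|\ge x^{1/2-\varepsilon}$ for arbitrarily large $x$. Combined with the contour computation, this forces $\sum_\rho\Res{s=\rho}\bigl(\Gamma_\C(s)x^s/\Lambda_g(s)\bigr)$ to exceed $x^{1/2-\varepsilon}$ for arbitrarily large $x$. I would then split this sum by the height $|\Im\rho|$ and the multiplicity of $\rho$, bound the high zeros and the multiple-zero terms using the Riemann--von Mangoldt count $\sim\frac1\pi T\log(qT^2)$ together with a zero-density estimate, bound each surviving simple-zero residue by $x^{\Re\rho}/|\Im\rho|^{(k-1)/2}$, take $x$ a small fixed power of $T$, and balance the exponents to reach $\Ns_g(T)\ge T^{\frac16-\varepsilon}$ for some $T\ge\varepsilon^{-1}$. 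This goes through provided the zeros of $\Lambda_g$ are \emph{sufficiently generic}: in particular $\Ns_g$ is not identically zero ---which holds for every degree-$2$ $g$ by \cite{Boo16}--- together with finer conditions that let one treat the multiple-zero contribution as genuinely lower order. At conductor $1$ these finer conditions are automatic; for $N>1$ they are not implied by the lone functional equation of $\Lambda_f$, and this is exactly the obstacle noted in the introduction ---the point at which character twists enter.

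\emph{Choosing the twist (the main obstacle).} It remains to produce a single Dirichlet character $\chi$ for which the genericity hypothesis holds with $g=f\otimes\chi$. My plan is a proof by contradiction modeled on Weil's converse theorem: if it failed for \emph{every} $\chi$, the zero sets of all the $\Lambda_{f\otimes\chi}$ would be simultaneously degenerate ---at the extreme, if every zero of $\Lambda_{f\otimes\chi}$ were multiple then $\Lambda_{f\otimes\chi}$ would be, up to an elementary factor, a perfect square of an entire function $R_\chi$ of order $1$, hence a ``degree-$1$'' object with its own functional equation. But each $\Lambda_{f\otimes\chi}$ does satisfy its functional equation, with conductor and root number prescribed by the theory of newforms, and ---this is the Weil input--- applying the converse theorem to the family $\{R_\chi\}_\chi$ would realise $L_f$ as (essentially) the square of a degree-$1$ $L$-function, i.e.\ exhibit $f$ as an Eisenstein series rather than a cusp form; contradiction. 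The crux, and the main obstacle I foresee, is making this precise: isolating the genericity condition so that its failure for all $\chi$ is provably excluded by Weil's theorem while its validity for a single $\chi$ truly licenses the argument of the previous paragraph (in particular, handling the intermediate cases where zeros are multiple but not all of even order). Granting that, applying the first two steps with that $\chi$ yields \eqref{eq:cgestimate} for $f\otimes\chi$, which is the theorem.
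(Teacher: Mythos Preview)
Your proposal has two genuine gaps, one in each of its halves.

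First, the Conrey--Ghosh mechanism does not run on the integrand $\Gamma_\C(s)x^s/\Lambda_g(s)$. The point of \cite{CG88} (and of this paper) is to work with $D_g(s)=L_g(s)\frac{d^2}{ds^2}\log L_g(s)$, whose poles are \emph{exactly} the simple zeros of $L_g$; with your integrand, every zero of $\Lambda_g$ contributes a residue, and at a zero of order $m\ge2$ that residue is $x^\rho$ times a polynomial in $\log x$ of degree $m-1$, with coefficients involving $1/\Lambda_g^{(m)}(\rho)$, for which no useful upper bound is available. A zero-density estimate bounds the number of zeros with $\Re\rho>\sigma$; it says nothing about multiplicity, so it cannot be used to show the multiple-zero contribution is lower order. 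In short, your $I_g(x)$ does not separate simple from multiple zeros, and no amount of balancing exponents repairs this.

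Second, and more seriously, the twist-choosing step is the entire content of the theorem, and the Weil-converse sketch does not work. The failure mode at level $N>1$ is \emph{not} that all zeros might be multiple; it is that the analogue of the ``main term'' (here, the function $H_{f,a,q,\alpha}$ of \eqref{eq:Hdef}) is a difference of two Dirichlet series whose poles can cancel. Your extreme case --- all zeros of all $\Lambda_{f\otimes\chi}$ multiple --- already does not yield a square (odd multiplicities $\ge3$), and even were each $\Lambda_{f\otimes\chi}$ a square, the putative $R_\chi$ would have neither a Dirichlet series nor an Euler product, so Weil's theorem does not apply. The paper instead works with \emph{additive} twists: for primes $p,q$ with $pq\equiv-1\pmod{Na}$ it proves algebraic identities among the $H_{f,a,q,\alpha}$ (Lemma~\ref{lem:holo}) which, together with \cite[Theorem~1.1]{Boo16}, force at least one of $H_{f,1,1,\alpha}$, $H_{f,a,p,\alpha}$, $H_{f,a,q,\alpha}$ to have a pole with real part $\ge\tfrac12$; Proposition~\ref{NfaqTlowerbound} then gives $\Ns_{f,a,q}(T)=\Omega(T^{1/6-\varepsilon})$ for the corresponding $(a,q)$, and the decomposition \eqref{Deltafaq} into multiplicative characters produces the required $\chi$ of conductor $1$, $p$, or $q$. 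That web of identities is the missing idea.
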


Next, for odd conductors we obtain a weaker but unconditional quantitative
estimate for $\Ns_f(T)$, without the twist. Moreover, we show that there is
a sort of ``Deuring--Heilbronn phenomenon'' at play, so that if $\Ns_f(T)$
is unexpectedly small then we can substantially improve our result for
$\Ns_{f\otimes\chi}(T)$.
\begin{theorem}\label{thm:oddN}
Let $f\in S_k(\Gamma_1(N))$ be a primitive form of odd conductor.
Then
$$
\forall\varepsilon>0,
\exists T\ge\varepsilon^{-1}\text{ such that }
\Ns_f(T)\ge\begin{cases}
\exp((\log T)^{\frac13-\varepsilon})&\text{if $k=1$ or $f$ is a CM form},\\
\log\log\log{T}&\text{otherwise}.
\end{cases}
$$
Further, if $\Ns_f(T)\ll1+T^\varepsilon$ for every $\varepsilon>0$, then
\begin{enumerate}
\item[(i)] there is a Dirichlet character $\chi$ such that,
$\forall\varepsilon>0, \exists T\ge\varepsilon^{-1}$
such that $\Lambda_{f\otimes\chi}(s)$ has
at least $T^{\frac12-\varepsilon}$ simple zeros with real part
$\frac12$ and imaginary part in $[-T,T]$;
\item[(ii)] $\Lambda_f(s)$ has simple zeros with real part arbitrarily close to $1$.
\end{enumerate}
\end{theorem}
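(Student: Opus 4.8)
The plan is to generalize the Conrey--Ghosh detector for simple zeros. The key observation is that, for a primitive $f$, the meromorphic function $\Lambda_f'(s)^2/\Lambda_f(s)$ is holomorphic except for a simple pole at each \emph{simple} zero $\rho$ of $\Lambda_f$, with residue $\Lambda_f'(\rho)$; zeros of multiplicity $\ge2$ produce no pole. I would therefore integrate a kernel of the form $G(s)\,x^{s}\,\Lambda_f'(s)^2/\Lambda_f(s)$, with $G$ a carefully chosen ratio of $\Gamma$-functions making the vertical integrals converge and localizing the transform near height $T$, around a tall rectangle with right edge at $\Re s=1+\eta$ and left edge at $\Re s=-\eta$ (with $0<\eta<\tfrac12$, so that for $k\ge2$ the rectangle avoids the poles of the archimedean factor; the case $k=1$ needs a small separate adjustment near $s=0$). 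On the right edge the integrand is handled term-by-term through the Dirichlet series of $(L_f'/L_f)^2L_f$, whose coefficients are $\ll n^{\varepsilon}$ by the Ramanujan bound; on the left edge one applies the functional equation of $\Lambda_f$ to return to the region of absolute convergence. Collecting residues yields an identity of the shape
\[
\sum_{\rho\ \text{simple},\ |\Im\rho|\le T}\Lambda_f'(\rho)\,G(\rho)\,x^{\rho}\;=\;(\text{main term})+(\text{error}),
\]
the error coming from the two vertical sides (for which convexity, or on-average subconvexity, for $L_f$ should suffice). Since $|\Lambda_f'(\rho)G(\rho)|\ll(NT)^{O(1)}$ for $|\Im\rho|\le T$ by a Cauchy estimate, any lower bound for the left side gives one for $\Ns_f(T)$, and balancing the main term against the error over a range $x=T^{c}$ is exactly the Conrey--Ghosh mechanism producing \eqref{eq:cgestimate} when $N=1$.

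Everything hinges on the \emph{main term}, which is a combination of two pieces arising from the two vertical sides, the left piece carrying the root number $\varepsilon_f$ of the functional equation as a relative factor. When $N=1$ one has $\varepsilon_f=\pm1$: for $\varepsilon_f=+1$ the two pieces reinforce and the main term is a nonzero constant times a fixed power of $x$, dominating the error for $x$ in a range like $[T^{1/3},T^{1/2}]$; for $\varepsilon_f=-1$ the two pieces can cancel, and there one needs the extra input that $\Ns_f(T)$ is not identically zero, supplied by \cite{Boo16}. For $N>1$ the obstruction — the exact analogue here of the breakdown of Hecke's converse theorem beyond conductor $1$ — is that $\varepsilon_f$ is now a Gauss-sum-valued root of unity that one cannot evaluate, nor even keep away from the cancelling configuration. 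Theorem~\ref{thm:twist} sidesteps this by twisting by $\chi$, which replaces $\varepsilon_f$ by $\varepsilon_{f\otimes\chi}$ (again a Gauss-sum expression, now depending on $\chi$): one can choose $\chi$ so that no cancellation occurs and a clean power-of-$x$ main term survives — at the price of no control over $\chi$. For odd $N$ that freedom is unavailable, so the best one can do without a twist is to play a single fixed quadratic twist attached to $N$ (legitimate because $2\nmid N$ keeps the twisted conductor and root number tractable) off against many scales $T$; this forced, non-optimizable construction loses almost everything and delivers only the $\log\log\log T$ bound of Theorem~\ref{thm:oddN}. When $f$ is a CM form or has weight $1$ — so that the attached Galois representation has small image — one gains extra structure (self-twists, or a factorization of $L_f$ over a fixed number field), hence far more control over the main term, and a much more efficient iteration reaching $\exp((\log T)^{1/3-\varepsilon})$.

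For the conditional statements, suppose $\Ns_f(T)\ll1+T^{\varepsilon}$ for every $\varepsilon>0$. Then the left side of the $f$-detector identity is $\ll(NT)^{O(1)}T^{\varepsilon}x^{\beta_{\max}}$, where $\beta_{\max}$ is the largest real part of a simple zero with $|\Im\rho|\le T$; compared with the identity, this forces the main term of the $f$-detector to be abnormally small over most of the range of $x$, which — the main term being an explicit combination of the leading Dirichlet coefficients with $\varepsilon_f$ — rigidly pins $\varepsilon_f$ near a cancelling value. The strategy is then to exploit this rigidity: via $(f\otimes\chi)\otimes\chi=f$ it constrains the twisted root numbers $\varepsilon_{f\otimes\chi}$, and feeding it into a second, sharper run of the contour argument for a well-chosen quadratic $\chi$ one expects the error term to shrink enough to extend the effective range to $x\le T^{1-\varepsilon}$, so the $f\otimes\chi$-detector yields $\gg T^{1/2-\varepsilon}$ simple zeros of $\Lambda_{f\otimes\chi}$; choosing $\chi$ so that $\Lambda_{f\otimes\chi}(\tfrac12+it)$ is real up to a constant (again using $2\nmid N$) then pins those zeros to $\Re s=\tfrac12$, giving (i). For (ii), the idea is that if additionally every simple zero of $\Lambda_f$ had real part $\le1-\delta$, then $\beta_{\max}\le1-\delta$, so the $f$-detector is $\ll(NT)^{O(1)}T^{\varepsilon}x^{1-\delta}$, whereas inserting into the kernel a pole at a real point $s_0$ slightly larger than $1-\delta$ and off the zeros of $\Lambda_f\Lambda_f'$ adds to the right-hand side a genuine term of size $\asymp x^{s_0}$, of strictly larger order in $x$; once the sharpened error bound makes this term visible, the contradiction — for $\delta$ small and $x,T$ suitably coupled — forces simple zeros of $\Lambda_f$ with real part arbitrarily close to $1$.

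The step I expect to be the genuine obstacle is obtaining a usable lower bound for the main term when $N>1$: every difficulty peculiar to higher conductor — the Gauss sums buried in the root number, the ramified local factors at $p\mid N$, and the fact that the lone functional equation of $\Lambda_f$ is too weak to pin down $f$ — is concentrated there, which is precisely why Theorem~\ref{thm:twist} must tolerate an uncontrolled twist and why the generic case of Theorem~\ref{thm:oddN} is so much weaker than \eqref{eq:cgestimate}. A second, more technical, hurdle is making the Deuring--Heilbronn transfer in part (i) quantitatively sharp — obtaining the exponent $\tfrac12$ rather than something smaller — which demands tracking precisely how the smallness of $\Ns_f(T)$ propagates into the error term, into the admissible range of $x$, and into the multiplicity bound for the critical-line zeros of $\Lambda_{f\otimes\chi}$.
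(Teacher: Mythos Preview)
Your sketch has the right high-level shape (a detector with simple poles at simple zeros, a contour, a main term that can cancel when $N>1$), but the mechanism you propose for defeating the cancellation is not the one that actually works, and as stated it would not succeed. The paper does \emph{not} use a quadratic multiplicative twist ``attached to $N$''; it uses \emph{additive} twists $\sum c_f(n)e(\alpha n)n^{-s}$ and, crucially, a Voronoi-type functional equation for these. The key structural input is a relation (Lemma~\ref{lem:holo}) linking the detector $H_{f,1,1,a/p}$ to $H_{f,a,q,-a/q}$ for primes $p,q$ with $pq\equiv -1\pmod{Na}$, together with the identity $\sum_{b=1}^{p-1}C_{f,b,p}(s)=-P_{f,p}(p^{1-s})\Delta_f(s)$. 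The reason odd $N$ is special is that one may take $p=2$: then this sum has a \emph{single} term, so no internal cancellation among residue classes can occur, and one can force a pole of some $H$ with $\Theta\ge\tfrac12$. This is the Conrey--Ghosh ``twist by $(-1)^n$'' idea, and it is exactly what your proposal is missing; nothing in your outline explains why a fixed quadratic twist would produce a nonvanishing main term when the untwisted one does not.

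Several of your attributions for the finer points are also off. The improvement in the CM and weight-$1$ cases does not come from self-twists or factorization giving ``more control over the main term''; it comes from Coleman's Vinogradov-type zero-free region for Hecke $L$-functions, which sharpens the bound $1-\theta_{f,1,1}(T)\gg(\log T)^{-2/3}(\log\log T)^{-1/3}$ and feeds into the $\Omega$-estimate \eqref{eq:omega2}. The $\log\log\log T$ bound is not a degraded power-saving but arises from a separate gap argument (Proposition~\ref{NfaqTlowerbound}, \eqref{eq:omega3}) on the heights of the poles. For the conditional statements: part~(ii) follows because $\Ns_f(T)\ll 1+T^\varepsilon$ together with the $\Omega$-bound forces $\theta_{f,1,1}=1$, which \emph{is} the assertion that simple zeros approach $\Re s=1$; no auxiliary pole insertion is needed. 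Part~(i) is obtained by showing that, under the hypothesis, all poles of $\Delta_{f,a,q}^*(s)$ not already poles of $\Delta_f^*(s)$ lie on $\Re s=\tfrac12$ (via \eqref{eq:Cdef} and Lemma~\ref{lem:holo}), and then invoking the $\Omega(T^{1/2-\varepsilon})$ bound from $\Theta(H_{f,a,q,-a/q})=1$; the character $\chi$ is of conductor $q$, not quadratic, and no reality argument on the critical line is used.
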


\begin{remarks}\
\begin{enumerate}
\item The exponent $\frac16$ in \eqref{eq:cgestimate} is related to the
best known subconvexity estimate for modular form $L$-functions in the $t$
aspect; it can be replaced by any $\delta>0$ such that
$L_f(\frac12+it)\ll_{f,\varepsilon}(1+|t|)^{\frac12-\delta+\varepsilon}$
holds for all primitive forms $f$ and all $\varepsilon>0$. In
\cite{BMN19} we showed that $\delta=\frac16$ is admissible.
Very recent work of Munshi \cite{Mun18} improves this to
$\delta=\frac16+\frac1{1200}$ for forms of level $1$, with
a corresponding improvement to \eqref{eq:cgestimate} in that
case.
\item In Theorem~\ref{thm:twist},
one can take the conductor of $\chi$ to be $1$ or a prime number
bounded by a polynomial function of $N$.
\item The proof of Theorem~\ref{thm:oddN} makes use of the idea
originating with Conrey and Ghosh \cite{CG88} of twisting the
coefficients of $L_f(s)$ by
$(-1)^n$ to prevent the main terms of our estimate from cancelling out.
This relies implicitly on the fact that there is no primitive
Dirichlet character of conductor $2$, and is the ultimate reason
for our restriction to odd $N$.
\item The improved estimate in Theorem~\ref{thm:oddN} in the Galois and
CM cases arises from Coleman's Vinogradov-type zero-free
region for Hecke $L$-functions \cite{Col90}.
\end{enumerate}
\end{remarks}

\subsection*{Acknowledgements}
We thank the anonymous referee for helpful suggestions and corrections,
and the Banff International Research Station for hosting us for a
Research in Teams Meeting (15rit201). A significant portion of this
project was completed during that week and we appreciated the excellent
working conditions.

\section{Dirichlet series}
In order to establish the existence of simple zeros it is useful
to study not only $L_f(s)$, but some related Dirichlet series and their additive twists.
This is one of the central ideas in \cite{CG88}.
A key role is played by the series
$$
D_f(s)=L_f(s)\frac{d^2}{ds^2}\log L_f(s)=\sum_{n=1}^{\infty}c_f(n)n^{-s},
$$
which has a meromorphic continuation to $\C$ with poles precisely
at the simple zeros of $L_f(s)$ (including the trivial zeros
$s=\frac{1-k}2-n$ for $n=0, 1, 2, \ldots$).

For $\alpha\in\Q^\times$ and $\chi$ a Dirichlet character, let
$$
L_f(s,\alpha)=\sum_{n=1}^\infty\lambda_f(n)e(\alpha n)n^{-s}
\quad\text{and}\quad
L_f(s,\chi)=\sum_{n=1}^\infty\lambda_f(n)\chi(n)n^{-s}.
$$
Likewise, define
$$
D_f(s,\alpha)=\sum_{n=1}^\infty c_f(n)e(\alpha n)n^{-s}
\quad\text{and}\quad
D_f(s,\chi)=\sum_{n=1}^\infty c_f(n)\chi(n)n^{-s}.
$$

Let $\xi$ denote the nebentypus character of $f$.
Set
$$
Q(N)=\{1\}\cup\{q\text{ prime}:q\nmid N\},
$$
and for each $q\in Q(N)$, define the rational functions
$$
P_{f,q}(x)=\begin{cases}
1&\text{if }q=1,\\
1-\lambda_f(q)x+\xi(q)x^2&\text{otherwise}
\end{cases}
$$
and
$$
R_{f,q}(x)=\begin{cases}
0&\text{if }q=1,\\
\frac{q\log^2{q}}{q-1}
\frac{x(\lambda_f(q)-4\xi(q)x+\lambda_f(q)\xi(q)x^2)}
{P_{f,q}(x)}
&\text{if }q\ne1.
\end{cases}
$$
These are such that, if
$$
\chi_0(n)=\begin{cases}
1&\text{if }(n,q)=1,\\
0&\text{otherwise}
\end{cases}
$$
denotes the trivial character mod $q$, then
$$
L_f(s,\chi_0)=P_{f,q}(q^{-s})L_f(s)
$$
and
\begin{equation}\label{Dfchi0}
D_f(s,\chi_0)=P_{f,q}(q^{-s})D_f(s)-\frac{q-1}{q}R_{f,q}(q^{-s})L_f(s).
\end{equation}

For any $a\in\Z$ coprime to $q$, we define
\begin{align*}
D_{f,a,q}(s)&=D_f(s,\tfrac{a}q)-R_{f,q}(q^{-s})L_f(s)
=\sum_{n=1}^{\infty}c_{f,a,q}(n)n^{-s},\\
D_{f,a,q}^*(s)&=D_{f,a,q}(s)+\psi'(s+\tfrac{k-1}2)L_f(s,\tfrac{a}{q}),
\quad\text{where }\psi(s)=\frac{\Gamma'}{\Gamma}(s)
\end{align*}
and
$$
D_{f,a,q}(s,\alpha)=\sum_{n=1}^{\infty}c_{f,a,q}(n)e(\alpha n)n^{-s}
\quad\text{for }\alpha\in\Q^\times.
$$

To each of $L_f$, $D_f$,
$D_{f,a,q}$, $D_{f,a,q}^*$ and their twists, we define
completed versions $\Lambda_f$, $\Delta_f$, $\Delta_{f,a,q}$,
$\Delta_{f,a,q}^*$ obtained by multiplying by $\Gamma_\C(s+\frac{k-1}2)$.
By the Ramanujan bound $|\lambda_f(q)|\le 2$ and
\cite[Proposition~3.1]{BK11},
$\Delta_f(s,a/q)-\Delta_{f,a,q}^*(s)$ is holomorphic for
$\Re(s)>0$. In turn, the analytic properties of $\Delta_{f,a,q}^*(s)$
are described by the following proposition.
\begin{proposition}\label{voronoi}
Let $f\in S_k(\Gamma_0(N),\xi)$ be a primitive form, $q\in Q(N)$,
and $a\in\Z$ coprime to $q$. Then
$\Delta_{f,a,q}^*(s)$ is a ratio of entire functions of finite order,
has at most simple poles, all
of which are contained in the critical strip
$\{s\in\C:\Re(s)\in(0,1)\}$,
and satisfies the functional equation
\begin{equation}\label{dstarfunceq}
\Delta_{f,a,q}^*(s)=\epsilon\xi(q)(Nq^2)^{\frac12-s}
\Delta_{\bar{f},-\overline{Na},q}^*(1-s),
\end{equation}
where $\bar{f}\in S_k(\Gamma_0(N),\overline{\xi})$ is the dual of $f$,
$\epsilon\in\C^\times$ is the root number of $f$
and $\overline{Na}$ denotes a multiplicative inverse of $Na\pmod*{q}$.
\end{proposition}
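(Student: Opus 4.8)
The plan is to deduce everything from the functional equation of the additive twist $\Lambda_f(s,a/q)$ together with the fact that $\frac{d^2}{ds^2}\log\Lambda_f$ is invariant under $s\mapsto1-s$, $f\mapsto\bar f$. First I would record the untwisted model. From $\Lambda_f(s)=\Gamma_\C(s+\tfrac{k-1}2)L_f(s)$ one has $\frac{d^2}{ds^2}\log\Lambda_f(s)=\psi'(s+\tfrac{k-1}2)+\frac{d^2}{ds^2}\log L_f(s)$, so that $\Lambda_f(s)\,\frac{d^2}{ds^2}\log\Lambda_f(s)=\Gamma_\C(s+\tfrac{k-1}2)\bigl(D_f(s)+\psi'(s+\tfrac{k-1}2)L_f(s)\bigr)$. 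The left-hand side equals $\bigl(\Lambda_f(s)\Lambda_f''(s)-\Lambda_f'(s)^2\bigr)/\Lambda_f(s)$, a ratio of entire functions of order one; it has simple poles precisely at the simple zeros of $\Lambda_f$, all lying in the open critical strip; and, since $\Lambda_f(s)=\epsilon N^{\frac12-s}\Lambda_{\bar f}(1-s)$ gives $\frac{d^2}{ds^2}\log\Lambda_f(s)=\frac{d^2}{ds^2}\log\Lambda_{\bar f}(1-s)$, it satisfies a clean functional equation. The summand $\psi'(s+\tfrac{k-1}2)L_f(s)$ is exactly what removes the would-be poles at the trivial zeros $s=\tfrac{1-k}2-n$ (where $\Lambda_f$ is finite and non-zero); this dictates the shape of $D_{f,a,q}^*$.

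Next I would bring in the twist. The essential analytic input is the meromorphic continuation and functional equation of $\Delta_f(s,a/q)$, the completed additive twist of $D_f$; this comes from Voronoi summation for $\Gamma_0(N)$ at a cusp of denominator $q$ with $(q,N)=1$ --- equivalently from \cite[Proposition~3.1]{BK11} --- and carries the factors $\epsilon\,\xi(q)\,(Nq^2)^{\frac12-s}$ appearing in \eqref{dstarfunceq}, together with correction terms supported at the prime $q$ and at the poles of $\Gamma_\C$. The role of $R_{f,q}(q^{-s})L_f(s)$ is to absorb the $q$-part: forming $D_f=L_f\cdot\frac{d^2}{ds^2}\log L_f$ does not commute with removing the Euler factor at $q$, and applying this relation to $L_f(s,\chi_0)=P_{f,q}(q^{-s})L_f(s)$ produces precisely the cross term $-\tfrac{q-1}{q}R_{f,q}(q^{-s})L_f(s)$ of \eqref{Dfchi0}, where $R_{f,q}(q^{-s})=-\tfrac{q}{q-1}P_{f,q}(q^{-s})\tfrac{d^2}{ds^2}\log P_{f,q}(q^{-s})$ --- a direct computation from the definitions of $P_{f,q}$ and $R_{f,q}$. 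Thus $D_{f,a,q}(s)=D_f(s,a/q)-R_{f,q}(q^{-s})L_f(s)$ is the correctly $q$-normalised additive twist, and adding $\psi'(s+\tfrac{k-1}2)L_f(s,a/q)$ performs the same $\Gamma$-factor completion as in the model. Ramanujan's bound $|\lambda_f(q)|\le2$ is used to place the zeros of $P_{f,q}(q^{-s})$ --- hence the poles of $R_{f,q}(q^{-s})$ --- on the line $\Re(s)=0$.

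With $\Delta_{f,a,q}^*(s)=\Gamma_\C(s+\tfrac{k-1}2)D_{f,a,q}^*(s)$ assembled, the three assertions follow. It is a ratio of entire functions of finite order because it is built by finitely many sums and products from $\Gamma_\C(s+\tfrac{k-1}2)$, $\psi'(s+\tfrac{k-1}2)$, the rational function $R_{f,q}(q^{-s})$ of $q^{-s}$, and $L_f(s)$, $L_f(s,a/q)$, $\Delta_f(s,a/q)$, each of which is of this type. Its poles are simple and confined to $\{\Re(s)\in(0,1)\}$: by the remark preceding the proposition, for $\Re(s)>0$ they coincide with the poles of $\Delta_f(s,a/q)$, which \cite[Proposition~3.1]{BK11} shows are simple and, once the $R_{f,q}$- and $\psi'$-corrections have cancelled the would-be poles on $\Re(s)=0$ and at $s=\tfrac{1-k}2-n$, confined to $0<\Re(s)<1$ (with $\Re(s)=1$ excluded by the non-vanishing of the relevant $L$-functions there); the symmetric statement for $\Re(s)<1$ then follows from the functional equation. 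Finally, \eqref{dstarfunceq} is obtained by combining the functional equation of the additive twist of $\Lambda_f$, the reflection identity $\frac{d^2}{ds^2}\log\Lambda_f(s)=\frac{d^2}{ds^2}\log\Lambda_{\bar f}(1-s)$, and the self-consistency of the correction terms under $f\mapsto\bar f$, $a\mapsto-\overline{Na}$ --- which for $R_{f,q}$ reduces, via the displayed identity, to the functional equation of $L_f(s,\chi_0)=P_{f,q}(q^{-s})L_f(s)$, and for $\psi'$ is immediate.

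The step I expect to be the main obstacle is establishing the continuation, functional equation, and exact pole locations of the additive twist $\Delta_f(s,a/q)$ of the log-derivative series $D_f$, i.e.\ the input extracted from \cite[Proposition~3.1]{BK11} and the Ramanujan bound; this is delicate because $D_f$ has no Euler product and its additive twist does not factor through additive twists of $L_f$. By contrast, the passage from that input to the statement of the proposition is essentially bookkeeping around the explicit rational functions $P_{f,q}$ and $R_{f,q}$.
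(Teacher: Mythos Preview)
Your outline has a genuine gap at exactly the point you flag as the main obstacle. You claim that the meromorphic continuation and functional equation of the additive twist $\Delta_f(s,a/q)$ of $D_f$ ``comes from Voronoi summation \ldots\ --- equivalently from \cite[Proposition~3.1]{BK11}''. But \cite[Proposition~3.1]{BK11} concerns the additive twists $L_f(s,\alpha)$ of the \emph{modular form} $L$-function; in the paper it is invoked only to assert that $\Lambda_f(s,a/q)$ and its shifts are entire. Voronoi summation likewise applies to the sequence $\lambda_f(n)$, not to the coefficients $c_f(n)$ of $D_f=L_f\cdot(\log L_f)''$, which are not Fourier coefficients of any cusp form. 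So neither source supplies a functional equation for $\Delta_f(s,a/q)$, and the step on which your whole argument rests is not actually available.

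The paper's proof circumvents this by abandoning additive twists at this point. Since $q$ is $1$ or a prime not dividing $N$, one can decompose via Dirichlet characters mod $q$:
\[
\Delta_{f,a,q}(s)=\Bigl(1-\tfrac{q}{q-1}P_{f,q}(q^{-s})\Bigr)\Delta_f(s)
+\frac1{q-1}\sum_{\chi\ne\chi_0}\tau(\overline{\chi})\chi(a)\Delta_f(s,\chi),
\]
and for each primitive $\chi$ of conductor $q$ one has $\Delta_f(s,\chi)=\Delta_{f\otimes\chi}(s)$ with $f\otimes\chi$ again primitive. The result \cite[(3.1)]{Boo16} then supplies, for each twist $f\otimes\chi$, a functional equation relating $\Delta_{f\otimes\chi}(s)$ to $\Delta_{\overline{f\otimes\chi}}(1-s)$ plus an explicit $\psi'$-correction times $\Lambda_f(s,\chi)$. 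Recombining over characters, using the functional equation satisfied by $1-\tfrac{q}{q-1}P_{f,q}(q^{-s})$, and finally invoking the classical Voronoi formula for $\Lambda_f(s,a/q)$ (now legitimately, since this is an additive twist of $L_f$), one obtains \eqref{dstarfunceq}. The pole locations then come from non-vanishing on $\Re(s)=1$ \cite{JS76} applied to each $L_{f\otimes\chi}$, together with the functional equation. Your identification of the roles of $R_{f,q}$ and $\psi'$ is correct, but those corrections do not by themselves produce the functional equation; the decomposition into multiplicative twists is the missing idea.
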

\begin{proof}
For $q=1$ the result follows immediately from \cite[(3.1)]{Boo16}, so we
may assume that $q$ is prime.
Let $\chi$ be a Dirichlet character of conductor $q$.
Then the complete twisted $L$-function $\Lambda_f(s,\chi)$
satisfies the functional equation
$$
\Lambda_f(s,\chi)=\epsilon\xi(q)\chi(N)\frac{\tau(\chi)^2}{q}
(Nq^2)^{\frac12-s}\Lambda_{\bar{f}}(1-s,\overline{\chi}),
$$
where $\epsilon\in\C^\times$ is the root number of $f$. Applying
\cite[(3.1)]{Boo16} to $f\otimes\chi$, we thus have
\begin{equation}\label{dfunceq1}
\Delta_f(s,\chi)
-\epsilon\xi(q)\chi(N)\frac{\tau(\chi)^2}{q}
(Nq^2)^{\frac12-s}\Delta_{\bar{f}}(1-s,\overline{\chi})
=\Lambda_f(s,\chi)\bigl(\psi'(\tfrac{k+1}2-s)-\psi'(s+\tfrac{k-1}2)\bigr).
\end{equation}

Next, we have
$$
\Delta_f\!\left(s,\frac{a}q\right)=\Delta_f(s)
-\frac{q}{q-1}\Delta_f(s,\chi_0)
+\frac1{q-1}\sum_{\substack{\chi\pmod*{q}\\\chi\ne\chi_0}}
\tau(\overline{\chi})\chi(a)\Delta_f(s,\chi),
$$
where $\chi_0$ is the trivial character mod $q$.
Combining this with \eqref{Dfchi0} we get
\begin{equation}\label{Deltafaq}
\begin{aligned}
\Delta_{f,a,q}(s)
=\left(1-\frac{q}{q-1}P_{f,q}(q^{-s})\right)\Delta_f(s)
+\frac1{q-1}\sum_{\substack{\chi\;(\text{mod }q)\\\chi\ne\chi_0}}
\tau(\overline{\chi})\chi(a)\Delta_f(s,\chi).
\end{aligned}
\end{equation}
Note in particular that $\Delta_{f,a,q}(s)$ is a ratio of entire
functions of finite order, and all of its poles in $\{s\in\C:\Re(s)>0\}$
are simple and located at simple zeros of either $\Lambda_f(s)$ or
$\Lambda_f(s,\chi)$ for some $\chi\ne\chi_0$.

Note that $P_{f,q}$ satisfies the functional equation
$$
1-\frac{q}{q-1}P_{f,q}(q^{-s})
=\xi(q)q^{1-2s}\left(1-\frac{q}{q-1}P_{\bar{f},q}(q^{s-1})\right),
$$
and thus, by \cite[(3.1)]{Boo16},
\begin{equation}\label{dfunceq2}
\begin{aligned}
\left(1-\frac{q}{q-1}P_{f,q}(q^{-s})\right)&\Delta_f(s)
-\epsilon\xi(q)(Nq^2)^{\frac12-s}
\left(1-\frac{q}{q-1}P_{\bar{f},q}(q^{s-1})\right)
\Delta_{\bar{f}}(1-s)\\
&=\left(1-\frac{q}{q-1}P_{f,q}(q^{-s})\right)
\Lambda_f(s)\bigl(\psi'(\tfrac{k+1}2-s)-\psi'(s+\tfrac{k-1}2)\bigr).
\end{aligned}
\end{equation}
Thus, replacing $f$ by $\bar{f}$, $s$ by $1-s$, $a$ by
$-\overline{Na}$ and $\chi$ by $\overline{\chi}$ in \eqref{Deltafaq},
we get
\begin{align*}
\Delta_{\bar{f},-\overline{Na},q}(1-s)
=\left(1-\frac{q}{q-1}P_{\bar{f},q}(q^{s-1})\right)\Delta_{\bar{f}}(1-s)
+\frac1{q-1}\sum_{\substack{\chi\pmod*{q}\\\chi\ne\chi_0}}
\tau(\chi)\chi(-Na)\Delta_{\bar{f}}(1-s,\overline{\chi}).
\end{align*}
Applying the functional equations \eqref{dfunceq1} and
\eqref{dfunceq2}, together with the relation
$\tau(\chi)\tau(\overline{\chi})=\chi(-1)q$, we thus have
\begin{align*}
&\Delta_{f,a,q}(s)
-\epsilon\xi(q)(Nq^2)^{\frac12-s}
\Delta_{\bar{f},-\overline{Na},q}(1-s)\\
&=\Biggl[
\left(1-\frac{q}{q-1}P_{f,q}(q^{-s})\right)\Lambda_f(s)
+\frac1{q-1}\sum_{\substack{\chi\pmod*{q}\\\chi\ne\chi_0}}
\tau(\overline{\chi})\chi(a)\Lambda_f(s,\chi)\Biggr]
\bigl(\psi'(\tfrac{k+1}2-s)-\psi'(s+\tfrac{k-1}2)\bigr)\\
&=\Lambda_f(s,\tfrac{a}{q})
\bigl(\psi'(\tfrac{k+1}2-s)-\psi'(s+\tfrac{k-1}2)\bigr).
\end{align*}
Applying the classical Voronoi formula \cite[p.~179, (A.10)]{KMV02}
$$
\Lambda_f(s,\tfrac{a}{q})
=\epsilon\xi(q)(Nq^2)^{\frac12-s}
\Lambda_{\bar{f}}\bigl(1-s,-\tfrac{\overline{Na}}q\bigr),
$$
we arrive at \eqref{dstarfunceq}.

Finally, by \eqref{Deltafaq} and the nonvanishing of automorphic
$L$-functions for $\Re(s)\ge1$ \cite{JS76},
$\Delta_{f,a,q}^*(s)$ is holomorphic for $\Re(s)\ge1$. This conclusion
applies to $\Delta_{\bar{f},-\overline{Na},q}^*(s)$ as well,
so by \eqref{dstarfunceq}, all poles of $\Delta_{f,a,q}^*(s)$ have
real part in $(0,1)$.
\end{proof}

Fix, for the remainder of this section, a choice of $f,a,q$ as in
Proposition~\ref{voronoi}, and $\alpha\in\Q^\times$.
We define
$$
\Ns_{f,a,q}(T)=\#\bigl\{\rho\in\C:|\Im(\rho)|\le T,
\Res{s=\rho}\Delta_{f,a,q}^*(s)\ne0\bigr\}
$$
and
\begin{equation}\label{Sydefn}
S_{f,a,q}(y,\alpha)=
\sum_{\rho\in\C}\Res{s=\rho}\Delta_{f,a,q}^*(s)(y-i\alpha)^{-\rho-\frac{k-1}2}
\quad\text{for }y\in\R_{>0},
\end{equation}
where $(y-i\alpha)^{-\rho-\frac{k-1}2}$ is defined in terms of the principal
branch of $\log(y-i\alpha)$.
Our goal is to derive the following expression
for the Mellin transform of $S_{f,a,q}(y,\alpha)$, up to a holomorphic
function on $\{s\in\C:\Re(s)>0\}$:
\begin{proposition}\label{prop:Mellin}
Define
\begin{equation}\label{eq:Hdef}
H_{f,a,q,\alpha}(s)=
\Delta_{f,a,q}(s,\alpha)-\epsilon\xi(q)(i\sgn\alpha)^k
(Nq^2\alpha^2)^{s-\frac12}\Delta_{\bar{f},-\overline{Na},q}
\!\left(s,-\frac1{Nq^2\alpha}\right)
\end{equation}
and
$$
I_{f,a,q,\alpha}(s)=\int_0^{|\alpha|/4}
S_{f,a,q}(y,\alpha)y^{s+\frac{k-1}2}\frac{dy}{y}.
$$
Then $I_{f,a,q,\alpha}(s)-H_{f,a,q,\alpha}(s)$ has analytic continuation
to $\Re(s)>0$. Moreover, if
$$
\int_0^{|\alpha|/4}|S_{f,a,q}(y,\alpha)|
y^{\sigma+\frac{k-1}2}\frac{dy}{y}<\infty
$$
for some $\sigma\ge0$, then $H_{f,a,q,\alpha}(s)$ is holomorphic for
$\Re(s)>\sigma$.
\end{proposition}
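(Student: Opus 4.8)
The plan is to realise $S_{f,a,q}(y,\alpha)$ as an inverse Mellin transform of $\Delta_{f,a,q}^*$ and to move the line of integration across all of its poles, which by Proposition~\ref{voronoi} lie in the strip $0<\Re(s)<1$. Fixing $y>0$ and $c>1$ and shifting the contour from $\Re(s)=c$ to $\Re(s)=1-c$, collecting residues should give $S_{f,a,q}(y,\alpha)=\mathcal A(y,\alpha)-\mathcal B(y,\alpha)$, where
$$
\mathcal A(y,\alpha)=\frac1{2\pi i}\int_{(c)}\Delta_{f,a,q}^*(s)(y-i\alpha)^{-s-\frac{k-1}2}\,ds
$$
and $\mathcal B$ is the same integral over the line $\Re(s)=1-c$. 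The shift, and the convergence of the residue sum defining $S_{f,a,q}$, are justified because $\Delta_{f,a,q}^*$, being a ratio of finite-order entire functions, has at most polynomial growth on vertical lines away from its poles (standard convexity plus Stirling), while $|\arg(y-i\alpha)|<\tfrac\pi2$ prevents $(y-i\alpha)^{-s}$ from overwhelming the exponential decay of $\Gamma_\C$. The same estimates show $S_{f,a,q}(y,\alpha)\ll y^{-C}$ as $y\to0^+$ for some absolute $C$, so that $I_{f,a,q,\alpha}(s)$ is holomorphic for $\Re(s)$ sufficiently large.

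Next I would compute the two Mellin transforms separately. On $\Re(s)=c$ the Dirichlet series for $D_{f,a,q}^*(s)=D_{f,a,q}(s)+\psi'(s+\tfrac{k-1}2)L_f(s,a/q)$ converges absolutely, so expanding termwise and using the Mellin pair $\tfrac1{2\pi i}\int_{(c)}\Gamma_\C(w)x^{-w}\,dw=2e^{-2\pi x}$ exhibits $\mathcal A(y,\alpha)$ as an absolutely convergent sum whose ``$D_{f,a,q}$-part'' is $2\sum_n c_{f,a,q}(n)e(\alpha n)n^{\frac{k-1}2}e^{-2\pi ny}$ — the twist $e(\alpha n)$ being produced by $e^{-2\pi n(y-i\alpha)}=e(\alpha n)e^{-2\pi ny}$ — and whose remaining ``$\psi'$-part'' has the shape $\sum_n\lambda_f(n)e(an/q)n^{\frac{k-1}2}\Psi(n(y-i\alpha))$, with $\Psi$ the inverse Mellin transform of $\Gamma_\C\psi'$. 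Integrating the $D_{f,a,q}$-part against $y^{s+\frac{k-1}2}\,\tfrac{dy}y$ over $(0,\infty)$ recovers $\Delta_{f,a,q}(s,\alpha)$ exactly, and restricting to $(0,|\alpha|/4)$ costs only an entire tail; the $\psi'$-part should contribute a function holomorphic for $\Re(s)>0$, using the rapid decay of $\Psi$ together with a bound for $\Psi$ near the imaginary axis (and some cancellation in the $n$-sum) to control it as $y\to0^+$. Hence $\int_0^{|\alpha|/4}\mathcal A(y,\alpha)y^{s+\frac{k-1}2}\,\tfrac{dy}y=\Delta_{f,a,q}(s,\alpha)+(\text{holomorphic for }\Re(s)>0)$.

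For $\mathcal B$ I would apply the functional equation \eqref{dstarfunceq}, substitute $s\mapsto1-s$, and expand the Dirichlet series for $D_{\bar f,-\overline{Na},q}^*$ on a far-right line; this turns $\mathcal B(y,\alpha)$ into $2\epsilon\xi(q)(Nq^2)^{-k/2}(y-i\alpha)^{-k}\sum_m c_{\bar f,-\overline{Na},q}(m)m^{\frac{k-1}2}e^{-2\pi m/(Nq^2(y-i\alpha))}$ plus an analogous $\psi'$-term. On $0<y<|\alpha|/4$ one may expand $(y-i\alpha)^{-k}=(i\sgn\alpha)^k|\alpha|^{-k}(1+iy/\alpha)^{-k}$ and $e^{-2\pi m/(Nq^2(y-i\alpha))}=e(-m/(Nq^2\alpha))e^{-2\pi my/(Nq^2\alpha^2)}(1+O(my^2))$ as convergent series, and the leading terms, integrated against $y^{s+\frac{k-1}2}\,\tfrac{dy}y$, produce — after collecting the powers of $Nq^2$ and of $\alpha$ — precisely $\epsilon\xi(q)(i\sgn\alpha)^k(Nq^2\alpha^2)^{s-\frac12}\Delta_{\bar f,-\overline{Na},q}(s,-\tfrac1{Nq^2\alpha})$, whereas every correction term carries an extra positive power of $y$ and hence, together with the tail $\int_{|\alpha|/4}^\infty$ and the $\psi'$-term, contributes only something holomorphic for $\Re(s)>0$. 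Subtracting the two, $I_{f,a,q,\alpha}(s)-H_{f,a,q,\alpha}(s)$ extends holomorphically to $\Re(s)>0$. For the last assertion, if $\int_0^{|\alpha|/4}|S_{f,a,q}(y,\alpha)|y^{\sigma+\frac{k-1}2}\,\tfrac{dy}y<\infty$ for some $\sigma\ge0$ then, since the integrand only improves at $y=0$ as $\Re(s)$ grows, $I_{f,a,q,\alpha}$ is holomorphic for $\Re(s)>\sigma$, whence so is $H_{f,a,q,\alpha}=I_{f,a,q,\alpha}-(I_{f,a,q,\alpha}-H_{f,a,q,\alpha})$.

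I expect the main obstacle to be the error analysis near $y=0$ on the $\mathcal B$-side: showing that the contribution of the $\psi'$-correction term (whose inverse Mellin transform $\Psi$ is only borderline-controlled along the imaginary axis) and the tails of the geometric-type expansions of $(y-i\alpha)^{-k}$ and $e^{-2\pi m/(Nq^2(y-i\alpha))}$ are holomorphic for $\Re(s)>0$ rather than merely for $\Re(s)$ large. This is exactly where the cutoff at $y<|\alpha|/4$, the Voronoi formula for the additive twist $L_f(s,a/q)$ already invoked in Proposition~\ref{voronoi}, and some cancellation in the resulting coefficient sums have to be combined. The uniform polynomial bounds for $\Delta_{f,a,q}^*$ on vertical lines that license all the contour shifts and the interchanges of summation and integration are routine but need to be proved with care in the $\Im(s)$-dependence.
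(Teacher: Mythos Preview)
Your overall strategy---contour-shift to realise $S_{f,a,q}(y,\alpha)$ as a difference of two vertical-line integrals, then Mellin-transform each piece---is exactly what the paper does (Lemmas~\ref{Sfaqz}--\ref{Blemma}). One organisational difference: the paper shifts $\Delta_{f,a,q}$ rather than $\Delta_{f,a,q}^*$, so the $\psi'$-correction appears once, as the term $\Lambda_f(s,\tfrac aq)\bigl(\psi'(\tfrac{k+1}2-s)-\psi'(s+\tfrac{k-1}2)\bigr)$ coming from the functional equation of $\Delta_{f,a,q}$, rather than being split between your $\mathcal A$ and $\mathcal B$. Your treatment of the $F$- and $\overline F$-pieces matches Lemmas~\ref{Flemma} and~\ref{Fbarlemma} (the latter is exactly your Taylor expansion of $(1+iy/\alpha)^{-k}$ and of the exponential, with the $M=1$ leading term producing the second summand of $H_{f,a,q,\alpha}$).

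The substantive gap is in the $\psi'$-terms, which you correctly flag as the obstacle but do not resolve. Your proposed mechanisms---``cancellation in the $n$-sum'' on the $\mathcal A$-side, and a direct bound on $\Psi$ near the imaginary axis on the $\mathcal B$-side---are not what makes this go through. The paper's device is the reflection formula $\psi'(w)+\psi'(1-w)=\pi^2/\sin^2(\pi w)$, which recombines all the $\psi'$-contributions into two pieces $A(z)$ and $B(z)$ on the common line $\Re(s)=k/2$. The $A$-piece, with integrand $\psi'(s+\tfrac{k-1}2)+\psi'(s-\tfrac{k-1}2)$, is handled by writing $\psi'(s)=\int_1^\infty\tfrac{\log x}{x-1}x^{-s}\,dx$, unfolding, and integrating by parts repeatedly \emph{in the $x$-variable}: the boundary terms at $x=1$ assemble into additive-twist $L$-functions $L_f(s+j+1,\tfrac aq+\alpha)$, which are entire by \cite[Proposition~3.1]{BK11}, and the remaining integral converges for $\Re(s)$ as negative as one likes once $m$ is large. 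The $B$-piece carries the factor $\pi^2/\sin^2\pi(s+\tfrac{k-1}2)$, which decays \emph{exponentially} in $|\Im s|$ on $\Re(s)=k/2$; consequently $B(\alpha+iy)$ has a Taylor expansion at $y=0$ and its truncated Mellin transform has only the poles of $\Gamma_\C(s)$. Without this reflection-formula split, your $\psi'$-part of $\mathcal B$ lacks exponential decay in $|\Im s|$, and controlling $\Psi$ uniformly along the imaginary axis with enough decay in $m$ is workable in principle but significantly messier than the $A/B$ decomposition.
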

The proof will be carried out in several lemmas, and involves the
following auxiliary functions defined on $\H=\{z\in\C:\Im(z)>0\}$:
$$
F(z)=2\sum_{n=1}^\infty c_{f,a,q}(n)n^{\frac{k-1}{2}}e(nz),
\quad\overline{F}(z)=2\sum_{n=1}^\infty
c_{\bar{f},-\overline{Na},q}(n)n^{\frac{k-1}{2}}e(nz),
$$
$$
A(z)=\frac1{2\pi i}\int_{\Re(s)=\frac{k}2}\Lambda_f(s,\tfrac{a}q)
\big(\psi'(s+\tfrac{k-1}{2})+\psi'(s-\tfrac{k-1}{2})\big)(-iz)^{-s-\frac{k-1}2}\,ds,
$$
and
$$
B(z)=\frac1{2\pi i}\int_{\Re(s)=\frac{k}2}\Lambda_f(s,\tfrac{a}{q})
\frac{\pi^2}{\sin^2(\pi(s+\tfrac{k-1}2))}(-iz)^{-s-\frac{k-1}2}\,ds.
$$

We first derive the following expression for $S_{f,a,q}$.
\begin{lemma}\label{Sfaqz}
For $z=\alpha+iy\in\H$, we have
\begin{equation}\label{eq:Sfaqz}
S_{f,a,q}(y,\alpha)=F(z)
-\frac{\epsilon\xi(q)}{(-i\sqrt{N}qz)^k}
\overline{F}\!\left(-\frac1{Nq^2z}\right)+A(z)-B(z).
\end{equation}
\end{lemma}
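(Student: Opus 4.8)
The plan is to compute $J(z):=\frac1{2\pi i}\int_{(c)}\Delta_{f,a,q}^*(s)(-iz)^{-s-\frac{k-1}2}\,ds$, for a fixed, sufficiently large real number $c$, in two different ways and to equate the answers. Throughout, $z=\alpha+iy\in\H$, so $-iz=y-i\alpha$ has $\Re(-iz)=y>0$ and $|\arg(-iz)|<\tfrac\pi2$, and $(y-i\alpha)^{-\rho-\frac{k-1}2}=(-iz)^{-\rho-\frac{k-1}2}$, which is the summand in \eqref{Sydefn}.

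\emph{First evaluation.} Move the line of integration leftward to $\Re(s)=1-c$. By Proposition~\ref{voronoi} all poles of $\Delta_{f,a,q}^*$ are simple and lie in the critical strip (hence in $1-c<\Re(s)<c$), and the residue of the integrand at such a pole $\rho$ is $\Res{s=\rho}\Delta_{f,a,q}^*(s)\cdot(y-i\alpha)^{-\rho-\frac{k-1}2}$; so $J(z)=S_{f,a,q}(y,\alpha)+\frac1{2\pi i}\int_{(1-c)}\Delta_{f,a,q}^*(s)(-iz)^{-s-\frac{k-1}2}\,ds$. In the last integral apply the functional equation \eqref{dstarfunceq}, substitute $s\mapsto1-s$, and expand $\Delta_{\bar f,-\overline{Na},q}^*(s)=\Gamma_\C(s+\tfrac{k-1}2)\sum_n c_{\bar f,-\overline{Na},q}(n)n^{-s}+\psi'(s+\tfrac{k-1}2)\Lambda_{\bar f}(s,-\tfrac{\overline{Na}}q)$ on the resulting line $\Re(s)=c$. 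Integrating the Dirichlet series term by term using $\frac1{2\pi i}\int_{(\sigma)}\Gamma(w)x^{-w}\,dw=e^{-x}$ (legitimate since $\Re(-iz)>0$), and noting that $-\tfrac1{Nq^2z}\in\H$, collapses that part to $\frac{\epsilon\xi(q)}{(-i\sqrt{N}qz)^k}\overline F\!\bigl(-\tfrac1{Nq^2z}\bigr)$; inserting the Voronoi formula $\Lambda_f(s,\tfrac aq)=\epsilon\xi(q)(Nq^2)^{\frac12-s}\Lambda_{\bar f}(1-s,-\tfrac{\overline{Na}}q)$ from the proof of Proposition~\ref{voronoi} and changing variables back turns the correction term into $\frac1{2\pi i}\int_{(1-c)}\Lambda_f(s,\tfrac aq)\psi'(\tfrac{k+1}2-s)(-iz)^{-s-\frac{k-1}2}\,ds$.

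\emph{Second evaluation.} On $\Re(s)=c$ write $\Delta_{f,a,q}^*(s)=\Delta_{f,a,q}(s)+\psi'(s+\tfrac{k-1}2)\Lambda_f(s,\tfrac aq)$; the first term integrates term by term (same $\Gamma$-Mellin formula) to $F(z)$, giving $J(z)=F(z)+\frac1{2\pi i}\int_{(c)}\Lambda_f(s,\tfrac aq)\psi'(s+\tfrac{k-1}2)(-iz)^{-s-\frac{k-1}2}\,ds$. The two leftover integrals — this one and the $\psi'(\tfrac{k+1}2-s)$ integral from the first evaluation — can be moved to the common line $\Re(s)=\tfrac k2$ without meeting a pole, since $\psi'(s+\tfrac{k-1}2)$ is holomorphic for $\Re(s)>\tfrac{1-k}2$ and $\psi'(\tfrac{k+1}2-s)$ for $\Re(s)<\tfrac{k+1}2$. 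Equating the two formulas for $J(z)$ and solving for $S_{f,a,q}(y,\alpha)$ leaves $F(z)-\frac{\epsilon\xi(q)}{(-i\sqrt{N}qz)^k}\overline F(-\tfrac1{Nq^2z})$ plus $\frac1{2\pi i}\int_{(k/2)}\Lambda_f(s,\tfrac aq)\bigl(\psi'(s+\tfrac{k-1}2)-\psi'(\tfrac{k+1}2-s)\bigr)(-iz)^{-s-\frac{k-1}2}\,ds$. Differentiating $\Gamma(w)\Gamma(1-w)=\pi/\sin(\pi w)$ twice gives $\psi'(w)+\psi'(1-w)=\pi^2/\sin^2(\pi w)$, and taking $w=s-\tfrac{k-1}2$ (so $1-w=\tfrac{k+1}2-s$, and $\sin^2(\pi(s-\tfrac{k-1}2))=\sin^2(\pi(s+\tfrac{k-1}2))$ since $k-1\in\Z$) converts that last integrand into $\Lambda_f(s,\tfrac aq)\bigl(\psi'(s+\tfrac{k-1}2)+\psi'(s-\tfrac{k-1}2)-\pi^2/\sin^2(\pi(s+\tfrac{k-1}2))\bigr)(-iz)^{-s-\frac{k-1}2}$, whose integral along $\Re(s)=\tfrac k2$ is exactly $A(z)-B(z)$. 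This gives \eqref{eq:Sfaqz}.

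\emph{The main obstacle.} The algebra above is essentially forced once one commits to the two evaluations; the real work is the analytic bookkeeping. One must check absolute convergence of the series \eqref{Sydefn} and, along a suitable sequence of heights, the vanishing of the horizontal connecting segments in both contour shifts. Both follow from Stirling's formula, since $\Gamma_\C(s+\tfrac{k-1}2)$ decays like $e^{-\frac\pi2|\Im s|}$ and this dominates the at most exponential growth $|(-iz)^{-s}|=|{-iz}|^{-\Re s}e^{(\arg(-iz))\Im s}$ because $|\arg(-iz)|<\tfrac\pi2$ — provided one also has polynomial-in-$|\Im s|$ bounds, valid by Proposition~\ref{voronoi} together with Phragm\'en--Lindel\"of (standard away from the poles), for $D_{f,a,q}^*$ and for the additive twists on the vertical lines $\Re(s)\in\{1-c,\tfrac k2,c\}$, and for the residues $\Res{s=\rho}\Delta_{f,a,q}^*(s)$, which by \eqref{Deltafaq} are $\Gamma_\C(\rho+\tfrac{k-1}2)$ times values of the derivative of $L_f$ or of a twisted $L$-function at a zero. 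The term-by-term integrations are justified by absolute convergence on $\Re(s)=c>1$, using $c_{f,a,q}(n)\ll_\varepsilon n^\varepsilon$ and the exponential factor $e^{-2\pi ny}$.
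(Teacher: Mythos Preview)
Your argument is correct and follows essentially the same route as the paper: a contour shift for the inverse Mellin transform, the functional equation \eqref{dstarfunceq}, term-by-term integration via the $\Gamma$-Mellin identity, and the reflection formula for $\psi'$. The only difference is organizational---you integrate $\Delta_{f,a,q}^*$ rather than $\Delta_{f,a,q}$ and split off the $\psi'\Lambda_f$ corrections symmetrically on both lines, which neatly sidesteps the paper's separate treatment of the residue at $s=0$ in the $k=1$ case.
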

\begin{proof}
Let $0<\varepsilon<\frac{1}{2}$.
For $z\in\H$ we define
\[
I_R(z)=\frac1{2\pi i}\int_{\Re(s)=1+\varepsilon}\Delta_{f,a,q}(s)(-iz)^{-s-\frac{k-1}2}\,ds,
\quad
I_L(z)=\frac1{2\pi i}\int_{\Re(s)=-\varepsilon}\Delta_{f,a,q}(s)(-iz)^{-s-\frac{k-1}2}\,ds.
\]
For the remainder of the proof we let $z =\alpha+iy$.

Since $\Delta_{f,a,q}^*(s)$ is a
ratio of entire functions of finite order with at most simple poles,
by the calculus of residues we have
\[
\Res{s=0}\Delta_{f,a,q}(s)(-iz)^{-s-\frac{k-1}2}
+S_{f,a,q}(y,\alpha)=I_R(z)-I_L(z).
\]
Note that the residue term at $s=0$ vanishes unless $k=1$.
We have
\begin{align*}
I_R(z)&=\frac1{2\pi i}\int_{\Re(s)=1+\varepsilon}\Gamma_\C(s+\tfrac{k-1}{2})
D_{f,a,q}(s)(-iz)^{-s-\frac{k-1}2}\,ds\\
&=2(-2\pi iz)^{-\frac{k-1}2}\sum_{n=1}^{\infty} c_{f,a,q}(n)
\frac1{2\pi i}\int_{\Re(s)=1+\varepsilon}\Gamma(s+\tfrac{k-1}{2})
(-2\pi inz)^{-s}\,ds.
\end{align*}
Using the identity
$$
\frac1{2\pi i}\int_{\Re(s)=1+\varepsilon}\Gamma(s+\tfrac{k-1}{2})z^{-s}\,ds=
z^{\frac{k-1}{2}}e^{-z}\quad\text{for }\Re(z)>0,
$$
it follows that
\begin{equation}\label{IRidentity}
I_R(z)=2\sum_{n=1}^\infty c_{f,a,q}(n)n^{\frac{k-1}{2}}e(nz)=F(z).
\end{equation}
By the functional equation, we have
$$
\Delta_{f,a,q}(s)=\epsilon\xi(q)(Nq^2)^{\frac12-s}
\Delta_{\bar{f},-\overline{Na},q}(1-s)+\Lambda_f(s,\tfrac{a}{q})
\big( \psi'(\tfrac{k+1}{2}-s)-\psi'(s+ \tfrac{k-1}{2}) \big),
$$
so $I_L(z)=I_{L1}(z)+I_{L2}(z)$,
where
\begin{align}\label{IL1}
I_{L1}(z)&=\frac1{2\pi i}\int_{\Re(s)=-\varepsilon}
\epsilon\xi(q)(Nq^2)^{\frac{1}2-s}
\Delta_{\bar{f},-\overline{Na},q}(1-s)(-iz)^{-s-\frac{k-1}2}\,ds, \\
\label{IL2}
I_{L2}(z)&=\frac1{2\pi i}\int_{\Re(s)=-\varepsilon}
\Lambda_f(s,\tfrac{a}{q})
\big(\psi'(\tfrac{k+1}{2}-s)-\psi'(s+\tfrac{k-1}{2})\big)
(-iz)^{-s-\frac{k-1}2}\,ds.
\end{align}

Making the substitution $s\mapsto 1-s$ in \eqref{IL1}, we get
\begin{equation}\label{IL1identity}
\begin{aligned}
I_{L1}(z)&=\frac{1}{2\pi i} \int_{\Re(s)=1+\varepsilon}
\epsilon\xi(q)(Nq^2)^{s-\frac{1}2}
\Delta_{\bar{f},-\overline{Na},q}(s)(-iz)^{s-\frac{k+1}2}\,ds\\
&=2\epsilon\xi(q)(Nq^2)^{-\frac12}(-iz)^{-\frac{k+1}2}
(2\pi)^{-\frac{k-1}2}
\frac1{2\pi i}\int_{\Re(s)=1+\varepsilon}
\Delta_{\bar{f},-\overline{Na},q}(s)\Big(\frac{2\pi}{-iNq^2z}\Big)^{-s}\,ds\\
&=2\epsilon\xi(q)(Nq^2)^{-\frac12}(-iz)^{-\frac{k+1}2}(2\pi)^{-\frac{k-1}2}
\sum_{n=1}^\infty c_{\bar{f},-\overline{Na},q}(n)
\Big(\frac{2\pi n}{-iNq^2z}\Big)^{\frac{k-1}{2}}e\Big(-\frac{n}{Nq^2z}\Big)\\
&=\frac{2\epsilon\xi(q)}{(-i\sqrt{N}qz)^k}
\sum_{n=1}^\infty c_{\bar{f},-\overline{Na},q}(n)n^{\frac{k-1}{2}}
e\Big(-\frac{n}{Nq^2z}\Big)\\
&=\frac{\epsilon\xi(q)}{(-i\sqrt{N}qz)^k}
\overline{F}\!\left(-\frac1{Nq^2z}\right).
\end{aligned}
\end{equation}

Next, note that the integrand in \eqref{IL2} is holomorphic for
$-\frac{k-1}{2}<\Re(s)<\frac{k+1}{2}$.
Moving the contour to $\Re(s)=\frac{k}2$, we get a contribution from the
pole at $s=0$ (present only when $k=1$) of
$$
\Res{s=0}\Lambda_f(s,\tfrac{a}{q})\psi'(s+\tfrac{k-1}2)(-iz)^{-s-\frac{k-1}2}
=-\Res{s=0}\Delta_{f,a,q}(s,\tfrac{a}{q})(-iz)^{-s-\frac{k-1}2}.
$$
Thus
\begin{align*}
I_{L2}(z)+\Res{s=0}&\Delta_{f,a,q}(s,\tfrac{a}{q})(-iz)^{-s-\frac{k-1}2}\\
&=\frac1{2\pi i} \int_{\Re(s)=\frac{k}2}
\Lambda_f(s,\tfrac{a}{q})
\big(\psi'(\tfrac{k+1}2-s)-\psi'(s+\tfrac{k-1}2)\big)(-iz)^{-s-\frac{k-1}2}\,ds.
\end{align*}
The reflection formula for $\Gamma$ implies that
$\psi'(1-s)+\psi'(s)=\frac{\pi^2}{\sin^2(\pi s)}$, so
\[
\psi'(\tfrac{k+1}{2}-s)
-\psi'(s+\tfrac{k-1}{2})
=\frac{\pi^2}{\sin^2(\pi(s+\tfrac{k-1}{2}))}
-\psi'(s+ \tfrac{k-1}{2})-\psi'(s-\tfrac{k-1}{2}).
\]
Therefore
$I_{L2}(z)+\Res{s=0}\Delta_{f,a,q}(s,\tfrac{a}{q})(-iz)^{-s-\frac{k-1}2}
=I_{L2B}(z)-I_{L2A}(z)$, where
\begin{align*}
I_{L2A}(z)&=\frac1{2\pi i}\int_{\Re(s)=\frac{k}2}\Lambda_f(s,\tfrac{a}{q})
\big(\psi'(s+\tfrac{k-1}{2})+\psi'(s-\tfrac{k-1}{2})\big)(-iz)^{-s-\frac{k-1}2}\,ds,\\
I_{L2B}(z)&=\frac1{2\pi i}\int_{\Re(s)=\frac{k}2}
\Lambda_f(s,\tfrac{a}{q})\frac{\pi^2}{\sin^2(\pi(s+\tfrac{k-1}2))}(-iz)^{-s-\frac{k-1}2}\,ds.
\end{align*}
Hence
$$
S_{f,a,q}(y,\alpha)=I_R(z)-I_{L1}(z)+I_{L2A}(z)-I_{L2B}(z).
$$
By applying \eqref{IRidentity}, \eqref{IL1identity} and by setting
$A(z)=I_{L2A}(z)$ and $B(z)= I_{L2B}(z)$,
we establish Lemma \ref{Sfaqz}.
\end{proof}

Next we evaluate
$\int_0^{|\alpha|/4}S_{f,a,q}(y,\alpha)y^{s+\frac{k-1}2}\frac{dy}{y}$,
considering each term on the right-hand side of \eqref{eq:Sfaqz} in
turn.
\begin{lemma}\label{Flemma}
$\int_0^{|\alpha|/4}F(\alpha+iy)y^{s+\frac{k-1}2}\frac{dy}{y}
-\Delta_{f,a,q}(s,\alpha)$
continues to an entire function of $s$.
\end{lemma}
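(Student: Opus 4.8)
The plan is to recognize that the Mellin transform of $F(\alpha+iy)$ over the \emph{entire} ray $y\in(0,\infty)$ reproduces $\Delta_{f,a,q}(s,\alpha)$, so that the difference in the statement is, up to sign, the tail integral $\int_{|\alpha|/4}^\infty F(\alpha+iy)y^{s+\frac{k-1}2}\frac{dy}{y}$; this tail will be entire in $s$ because $F(\alpha+iy)$ decays exponentially as $y\to\infty$.

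First I would note the crude bound $c_{f,a,q}(n)\ll_{f,a,q,\varepsilon}n^\varepsilon$. This follows from the Ramanujan bound $|\lambda_f(n)|\le d(n)$: the coefficients $c_f(n)$ of $D_f(s)=L_f(s)\frac{d^2}{ds^2}\log L_f(s)$ form a Dirichlet convolution of $\lambda_f$ with a von Mangoldt-type function carrying a $\log^2$ weight, hence $\ll_\varepsilon n^\varepsilon$; multiplying by $e(an/q)$ does not change the size; and the Dirichlet coefficients of $R_{f,q}(q^{-s})L_f(s)$ are controlled by expanding $1/P_{f,q}(x)=\sum_{j\ge0}\lambda_f(q^j)x^j$ (with $|\lambda_f(q^j)|\le j+1$), so that $R_{f,q}(q^{-s})$ has coefficients supported on powers of $q$ and of size $\ll_q\log n$. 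Consequently, for $z=\alpha+iy$ with $y\ge|\alpha|/4$,
$$
F(z)=2\sum_{n=1}^\infty c_{f,a,q}(n)n^{\frac{k-1}2}e(n\alpha)e^{-2\pi ny}\ll e^{-\pi y},
$$
which gives the desired exponential decay.

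Next, for $\Re(s)$ larger than the abscissa of absolute convergence of $D_{f,a,q}(s,\alpha)$, I would swap summation and integration---justified by Tonelli's theorem, since $\Gamma(\Re(s)+\frac{k-1}2)\sum_n|c_{f,a,q}(n)|n^{-\Re(s)}<\infty$---and apply the standard Gamma integral $\int_0^\infty e^{-2\pi ny}y^{s+\frac{k-1}2}\frac{dy}{y}=(2\pi n)^{-s-\frac{k-1}2}\Gamma(s+\tfrac{k-1}2)$, valid since $\Re(s+\tfrac{k-1}2)>0$. This yields
$$
\int_0^\infty F(\alpha+iy)y^{s+\frac{k-1}2}\frac{dy}{y}=2(2\pi)^{-s-\frac{k-1}2}\Gamma(s+\tfrac{k-1}2)\sum_{n=1}^\infty c_{f,a,q}(n)e(n\alpha)n^{-s}=\Delta_{f,a,q}(s,\alpha),
$$
and subtracting the part of the integral over $(0,|\alpha|/4)$ gives, for such $s$,
$$
\int_0^{|\alpha|/4}F(\alpha+iy)y^{s+\frac{k-1}2}\frac{dy}{y}-\Delta_{f,a,q}(s,\alpha)=-\int_{|\alpha|/4}^\infty F(\alpha+iy)y^{s+\frac{k-1}2}\frac{dy}{y}.
$$

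Finally, the exponential decay of $F(\alpha+iy)$ makes the right-hand integral converge absolutely and uniformly for $s$ in any compact subset of $\C$, since there $|F(\alpha+iy)|\,y^{\Re(s)+\frac{k-1}2}$ is integrable against $\frac{dy}{y}$ on $[|\alpha|/4,\infty)$. Hence by Morera's theorem (or by differentiating under the integral sign) the right-hand side is an entire function of $s$, which provides the asserted continuation of the difference on the left. The only steps needing care are the polynomial bound on $c_{f,a,q}(n)$---routine bookkeeping with the Ramanujan bound and the expansion of $R_{f,q}$---and the justification of the interchange of sum and integral; I do not anticipate any genuine obstacle.
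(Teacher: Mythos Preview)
Your argument is correct and is exactly the paper's approach: compute the full Mellin transform $\int_0^\infty F(\alpha+iy)y^{s+\frac{k-1}2}\frac{dy}{y}=\Delta_{f,a,q}(s,\alpha)$, then observe that the tail over $y\ge|\alpha|/4$ is entire because $F(\alpha+iy)$ decays exponentially. You have simply fleshed out the coefficient bound and the interchange of sum and integral that the paper leaves implicit.
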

\begin{proof}
From the definition of $F$ we compute that
$$
\int_0^\infty F(\alpha+iy)y^{s+\frac{k-1}2}\frac{dy}{y}
=\Delta_{f,a,q}(s,\alpha).
$$
Moreover, $F(\alpha+iy)$ decays exponentially as $y\to\infty$, so
the contribution to the integral from $y>|\alpha|/4$ is entire.
\end{proof}

\begin{lemma}\label{Fbarlemma}
For any $M\in\Z_{\ge0}$,
\begin{equation}\label{eq:fbarmellin}
\begin{aligned}
&\int_0^{|\alpha|/4}\bigl(-i\sqrt{N}q(\alpha+iy)\bigr)^{-k}
\overline{F}\!\left(-\frac1{Nq^2(\alpha+iy)}\right)
y^{s+\frac{k-1}2}\frac{dy}{y}\\
&-(i\sgn\alpha)^k\sum_{m=0}^{M-1}(-i\alpha)^{-m}
{{s+m-\frac{k+1}2}\choose{m}}(Nq^2\alpha^2)^{s-\frac12+m}
\Delta_{\bar{f},-\overline{Na},q}\!\left(s+m,-\frac1{Nq^2\alpha}\right)
\end{aligned}
\end{equation}
continues to a holomorphic function on $\{s\in\C:\Re(s)>1-M\}$.
\end{lemma}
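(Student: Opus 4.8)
The plan is to trade the singular behaviour of $\overline F$ near the rational point $-\tfrac1{Nq^2\alpha}$ for the additive twists $\Delta_{\bar f,-\overline{Na},q}\!\big(\,\cdot\,,-\tfrac1{Nq^2\alpha}\big)$ by a change of variables that linearises the dependence of the argument of $\overline F$ on the Mellin variable. Write $z=\alpha+iy$, $\beta=-\tfrac1{Nq^2\alpha}$, $\lambda=\tfrac1{Nq^2\alpha^2}$, and assume first that $\alpha>0$; the case $\alpha<0$ is identical after tracking signs, which is why only $\alpha^2$ and $\sgn\alpha$ survive in the statement. A direct computation gives $-\tfrac1{Nq^2z}=\beta+i\lambda w$ where $w=w(y):=\tfrac{\alpha y}{z}=\tfrac{y}{1+iy/\alpha}$, and inverting this yields $y=\tfrac{w}{1-iw/\alpha}$, $z=\tfrac{\alpha}{1-iw/\alpha}$ and $\tfrac{dy}{y}=(1-iw/\alpha)^{-1}\tfrac{dw}{w}$. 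Substituting these into the integrand of \eqref{eq:fbarmellin} and collecting the powers of $1-iw/\alpha$ contributed by $(-i\sqrt Nqz)^{-k}$, by $y^{s+(k-1)/2}$ and by $dy/y$, the exponents telescope to $\tfrac{k-1}2-s$ and the constant collapses to $i^k(Nq^2\alpha^2)^{-k/2}$, so the first line of \eqref{eq:fbarmellin} becomes
\[
i^k(Nq^2\alpha^2)^{-k/2}\int_{\Gamma}\overline F(\beta+i\lambda w)\,w^{s+\frac{k-1}2}(1-iw/\alpha)^{\frac{k-1}2-s}\,\frac{dw}{w},
\]
where $\Gamma=\{w(y):0<y<\alpha/4\}$ is an arc in $\{\Re w>0\}$ with endpoints $0$ and $w(\alpha/4)$.

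The integrand is holomorphic on the simply connected half-plane $\{\Re w>0\}$ and is integrable near $w=0$ once $\Re s$ is large, so by Cauchy's theorem I may deform $\Gamma$ to the path that runs along the positive real axis from $0$ to a fixed $c>0$ with $Nq^2\alpha\,c\lambda<1$ and then joins $w(\alpha/4)$ inside $\{\Re w>0\}$; the second portion is compact and bounded away from $\Re w=0$, hence contributes an entire function of $s$. On the real segment I substitute $w=v/\lambda$, using $\lambda^{-s-(k-1)/2}=(Nq^2\alpha^2)^{s+(k-1)/2}$ and $(\lambda\alpha)^{-1}=Nq^2\alpha$, to find that the first line of \eqref{eq:fbarmellin} equals
\[
i^k(Nq^2\alpha^2)^{s-\frac12}\int_0^{c\lambda}\overline F(\beta+iv)\,v^{s+\frac{k-1}2}(1-iNq^2\alpha v)^{\frac{k-1}2-s}\,\frac{dv}{v}
\]
up to an entire function of $s$.

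Now expand $(1-iNq^2\alpha v)^{\frac{k-1}2-s}=\sum_{m=0}^{M-1}\binom{\frac{k-1}2-s}{m}(-iNq^2\alpha v)^m+E_M(s,v)$, where $E_M(s,v)\ll_{s,M}v^M$ on $(0,c\lambda)$ because $Nq^2\alpha\,c\lambda<1$. The $\overline F$-analogue of the Mellin computation in the proof of Lemma~\ref{Flemma}, namely $\int_0^\infty\overline F(\beta+iv)v^{s'+\frac{k-1}2}\tfrac{dv}{v}=\Delta_{\bar f,-\overline{Na},q}(s',\beta)$ together with the meromorphic continuation of the completed additive twist, shows that $\int_0^{c\lambda}\overline F(\beta+iv)v^{(s+m)+\frac{k-1}2}\tfrac{dv}{v}$ agrees with $\Delta_{\bar f,-\overline{Na},q}(s+m,\beta)$ up to an entire function (the tail $\int_{c\lambda}^\infty$ being entire by the exponential decay of $\overline F$). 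Using $\binom{\frac{k-1}2-s}{m}=(-1)^m\binom{s+m-\frac{k+1}2}{m}$ and collecting the powers of $\alpha$ and $Nq^2$, the $m$-th term produced this way is exactly $(i\sgn\alpha)^k(-i\alpha)^{-m}\binom{s+m-\frac{k+1}2}{m}(Nq^2\alpha^2)^{s-\frac12+m}\Delta_{\bar f,-\overline{Na},q}(s+m,\beta)$, i.e.\ the term subtracted in \eqref{eq:fbarmellin}. Hence the quantity in \eqref{eq:fbarmellin} equals
\[
i^k(Nq^2\alpha^2)^{s-\frac12}\int_0^{c\lambda}\overline F(\beta+iv)\,E_M(s,v)\,v^{s+\frac{k-1}2}\,\frac{dv}{v}
\]
plus an entire function of $s$.

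Finally, $(Nq^2\alpha^2)^{s-1/2}$ is entire, so it remains only to check that this last integral is holomorphic for $\Re s>1-M$, and this is the sole quantitative step. From $c_{\bar f,-\overline{Na},q}(n)\ll_\varepsilon n^\varepsilon$, which follows from the Ramanujan bound, one gets $\overline F(\beta+iv)\ll_\varepsilon v^{-\frac{k+1}2-\varepsilon}$ as $v\to0^+$; combined with $E_M(s,v)\ll_{s,M}v^M$ this makes the integrand $\ll_{s,M,\varepsilon}v^{\Re s-2-\varepsilon+M}$, so the integral converges locally uniformly on $\{\Re s>1+\varepsilon-M\}$ for every $\varepsilon>0$, hence defines a holomorphic function on $\{\Re s>1-M\}$. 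The only genuinely delicate point is the bookkeeping of principal branches in the substitution $w=\alpha y/z$ (all choices are consistent because $y$ and $z$ stay close to the positive real axis for $0<y<\alpha/4$, so every factor one splits off has argument in a narrow cone about the real axis); everything else is routine, the essential idea being that this substitution converts the fractional-linear dependence of the argument of $\overline F$ on $y$ into an affine dependence on the new variable, after which expanding the single remaining power $(1-iNq^2\alpha v)^{\frac{k-1}2-s}$ produces the asserted expansion term by term.
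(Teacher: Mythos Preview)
Your proof is correct and takes a genuinely different route from the paper's.

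The paper works directly in the $y$-variable: writing $u=y/\alpha$, it uses the decomposition $-\tfrac{1}{Nq^2z}=\beta+i|\beta u|-\tfrac{\beta u^2}{1+iu}$, expands $(1+iu)^{-k}e\bigl(-\tfrac{n\beta u^2}{1+iu}\bigr)$ as a double power series in $u$ (with an explicit tail estimate imported from \cite{Boo16}), performs a termwise Mellin transform, and then collapses the inner sum via the Chu--Vandermonde identity to reach the binomial coefficients $\binom{s+m-\frac{k+1}2}{m}$.

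Your substitution $w=\alpha y/z$ is more conceptual: it turns the fractional-linear map $z\mapsto -\tfrac{1}{Nq^2z}$ into the affine map $w\mapsto\beta+i\lambda w$, so after the change of variables the only nontrivial factor left is the single power $(1-iNq^2\alpha v)^{\frac{k-1}2-s}$, whose binomial expansion immediately produces the terms in \eqref{eq:fbarmellin} without any combinatorial identity. The tail is then controlled by the crude Ramanujan-type bound $\overline F(\beta+iv)\ll_\varepsilon v^{-\frac{k+1}2-\varepsilon}$ rather than the sharper termwise estimate the paper quotes. Both arguments deliver the same region $\Re(s)>1-M$; yours is shorter and explains \emph{why} the completed twists $\Delta_{\bar f,-\overline{Na},q}(s+m,\beta)$ appear (they are the Mellin transforms of $\overline F(\beta+iv)$ along the straightened contour), whereas the paper's approach keeps the calculation entirely real-variable at the cost of the Chu--Vandermonde step. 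One small point to tidy: your convergence condition for the binomial series should read $|Nq^2\alpha|\,c\lambda<1$ (i.e.\ $c<|\alpha|$), so that the $\alpha<0$ case is genuinely covered; and the contour deformation near $w=0$ is justified because $\Gamma$ is tangent to the positive real axis there, so the connecting arc has negligible contribution for $\Re(s)$ large.
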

\begin{proof}
As the proof of this lemma is very similar to that of \cite[Lemma~3.3]{Boo16},
we just provide a sketch and refer
to the appropriate parts of loc.~cit.\ for the relevant details.
Fix $y\in(0,|\alpha|/4]$, and set $z=\alpha+iy$, $\beta=-1/Nq^2\alpha$,
and $u=y/\alpha$. It may be checked that
\[
-\frac1{Nq^2z}=\beta+i|\beta u|-\frac{\beta u^2}{1+iu}.
\]
Therefore
\begin{align*}
&(-i\sqrt{N}qz)^{-k}\overline{F}\!\left(-\frac1{N q^2 z}\right)\\
&=2(-i\sqrt{N}q\alpha)^{-k}\sum_{n=1}^\infty
c_{\bar{f},-\overline{Na},q}(n)n^{\frac{k-1}2}e(\beta n)e^{-2\pi n|\beta u|}
(1+iu)^{-k}e\Big(-\frac{n\beta u^2}{1+iu}\Big).
\end{align*}
It was shown in \cite[p.~820]{Boo16} that
\[
(1+iu)^{-k}e\Big(-\frac{n\beta u^2}{1+iu}\Big)
=\sum_{m=0}^\infty(-iu)^m\sum_{j=0}^m\binom{m+k-1}{m-j}
\frac{(-2\pi n|\beta u|)^j}{j!},
\]
and for $M,K\in\Z_{\ge 0}$ and $|u|\le\frac14$,
\[
\sum_{m=M}^\infty(-iu)^m\sum_{j=0}^m\binom{m+k-1}{m-j}
\frac{(-2\pi n|\beta u|)^j}{j!}
\ll_{\alpha,M,K}|u|^{M-K}n^{-K}e^{2\pi n|\beta u|}.
\]
Thus, we obtain
\begin{equation}\label{Fbaridentity}
\begin{aligned}
&(-i\sqrt{N}qz)^{-k}\overline{F}\!\left(-\frac1{Nq^2z}\right)
=O_{M,K}\Big(y^{M-K}\sum_{n=1}^\infty|c_{\bar{f},-\overline{Na},q}(n)|n^{\frac{k-1}{2}-K}\Big)
+2(-i\sqrt{N}q\alpha)^{-k}\\
&\times\sum_{m=0}^{M-1}\Big(-\frac{iy}{\alpha}\Big)^m
\sum_{j=0}^m\binom{m+k-1}{m-j}
\sum_{n=1}^\infty c_{\bar{f},-\overline{Na},q}(n)n^{\frac{k-1}2}e(\beta n)
\frac1{j!}\Big(-\frac{2\pi ny}{Nq^2\alpha^2}\Big)^j
e^{-\frac{2\pi ny}{Nq^2\alpha^2}}.
\end{aligned}
\end{equation}
By the choice $K=\lfloor\frac{k-1}2\rfloor+2$, the error term converges
and is $O_M(y^{M-K})$.
For the other term note that
\begin{align*}
&2y^m\sum_{n=1}^\infty c_{\bar{f},-\overline{Na},q}(n)n^{\frac{k-1}2}e(\beta n)
\frac1{j!}\Big(-\frac{2\pi ny}{Nq^2\alpha^2}\Big)^j
e^{-\frac{2\pi ny}{Nq^2\alpha^2}}\\
&=2\frac{y^{j+m}}{j!}\frac{d^j}{dy^j}
\sum_{n=1}^\infty c_{\bar{f},-\overline{Na},q}(n)n^{\frac{k-1}2}e(\beta n)
e^{-\frac{2\pi ny}{Nq^2\alpha^2}}\\
&=\frac{y^{j+m}}{j!}\frac{d^j}{dy^j}
\frac1{2\pi i}\int_{\Re(s)=m+2}
(Nq^2\alpha^2)^{s+\frac{k-1}2}\Delta_{\bar{f},-\overline{Na},q}(s,\beta)y^{-s-\frac{k-1}2}\,ds\\
&=\frac1{2\pi i}\int_{\Re(s)=2}
\binom{-s-\frac{k-1}2-m}{j}
(Nq^2\alpha^2)^{s+\frac{k-1}2+m}
\Delta_{\bar{f},-\overline{Na},q}(s+m,\beta)y^{-s-\frac{k-1}2}\,ds.
\end{align*}
Inserting this in the last term of \eqref{Fbaridentity} and using the
Chu--Vandermonde identity
\[
\sum_{j=0}^m\binom{m+k-1}{m-j}\binom{-s-\frac{k-1}2-m}{j}=
\binom{-s+\frac{k-1}2}{m}=(-1)^m\binom{s+m-\frac{k+1}2}{m},
\]
we arrive at
\begin{align*}
&(-i\sqrt{N}qz)^{-k}\overline{F}\!\left(-\frac1{Nq^2z}\right)\\
&=O_M(y^{M-\lfloor\frac{k+3}2\rfloor})
+(i\sgn\alpha)^k\sum_{m=0}^{M-1}\frac{(-i\alpha)^{-m}}{2\pi i}\int_{\Re(s)=2}
{{s+m-\frac{k+1}2}\choose{m}}
(Nq^2\alpha^2)^{s-\frac12+m}\\
&\hspace{8cm}\cdot
\Delta_{\bar{f},-\overline{Na},q}\!\left(s+m,-\frac1{Nq^2\alpha}\right)
y^{-s-\frac{k-1}2}\,ds.
\end{align*}

We multiply both sides by $y^{s+\frac{k-1}2-1}$ and integrate over
$y\in(0,|\alpha|/4]$. The error term yields a holomorphic function
for $\Re(s)>2-M$. As for the sum over $m$, by shifting the contour
to the right, we see that each term decays rapidly as $y\to\infty$,
so the integral over $(0,|\alpha|/4]$ differs from the full Mellin
transform by an entire function. By Mellin inversion, it follows
that \eqref{eq:fbarmellin} is holomorphic for $\Re(s)>2-M$. Finally,
replacing $M$ by $M+1$ and discarding the final term of the sum concludes
the proof of the lemma.
\end{proof}

\begin{lemma}\label{Alemma}
$\Gamma_\C(s)^{-1}\int_0^{|\alpha|/4}A(\alpha+iy)y^s\frac{dy}y$
continues to an entire function of $s$.
\end{lemma}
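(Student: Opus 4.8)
The plan is to derive an explicit integral representation for $A$ that is manifestly holomorphic in a neighbourhood of the real point $z=\alpha$, and then to read off the lemma from a routine Mellin-transform computation. Since the integrand in the definition of $A(z)$ is holomorphic for $\Re(w)\ge\tfrac k2$ and, for fixed $z\in\H$, decays exponentially as $|\Im(w)|\to\infty$ (the $\Gamma_\C$-factor of $\Lambda_f(w,a/q)$ dominates, $\psi'$ providing only polynomial size and $(-iz)^{-w-\frac{k-1}2}$ at most exponential growth of rate $<\tfrac\pi2$), I may push the contour to a line $\Re(w)=\sigma$ with $\sigma>\max(1,\tfrac{k-1}2)$. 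On that line the integrand is $M[g_1](w)\,M[g_2](w)\,(-iz)^{-w-\frac{k-1}2}$, where $M[\cdot]$ is the Mellin transform, $g_1(u)=u^{\frac{k-1}2}\phi(iu)$ with $\phi(w)=2\sum_{m=1}^\infty\lambda_f(m)e(am/q)m^{\frac{k-1}2}e(mw)$ (so that $M[g_1](w)=\Lambda_f(w,a/q)$), and $g_2(\tau)=-\frac{\log\tau}{1-\tau}\bigl(\tau^{\frac{k-1}2}+\tau^{-\frac{k-1}2}\bigr)\mathbf 1_{(0,1)}(\tau)$ (so that $M[g_2](w)=\psi'(w+\tfrac{k-1}2)+\psi'(w-\tfrac{k-1}2)$, using $\psi'(v)=\int_0^1\frac{-\log\tau}{1-\tau}\tau^{v-1}\,d\tau$). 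The Mellin convolution theorem, followed by the substitution $r=1/\tau$, then yields
\[
A(z)=\int_1^\infty\frac{(1+r^{k-1})\log r}{r-1}\,\phi(rz)\,dr\qquad(z\in\H),
\]
where the integrand extends holomorphically across $r=1$ and decays exponentially as $r\to\infty$ because $\phi$ vanishes exponentially at $i\infty$.

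The crucial step is to continue this representation past the real axis. Suppose $\alpha>0$ (the case $\alpha<0$ is symmetric, rotating in the opposite sense). For fixed $z\in\H$ and $0\le\arg(r)\le\theta$ with $\theta>0$ small, one has $\Im(rz)=|rz|\sin(\arg r+\arg z)\ge c_z|r|$ with $c_z>0$, so $\phi(rz)\ll e^{-2\pi c_z|r|}$ throughout that sector while $\frac{(1+r^{k-1})\log r}{r-1}\ll|r|^{k-2}\log|r|$; hence, by Cauchy's theorem, the contour $[1,\infty)$ may be rotated to the ray $e^{i\theta}[1,\infty)$, the arcs at infinity vanishing. On the rotated ray, $\Im(rz)=|rz|\sin(\arg r+\arg z)$ stays positive and bounded below for every $z$ with $\arg(z)\in(-\theta,\pi-\theta)$, so the rotated integral converges absolutely and locally uniformly there and defines a holomorphic extension of $A$ to that region, which contains a full neighbourhood of the segment $\{\alpha+iy:0\le y\le|\alpha|/4\}$. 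In particular $A(\alpha+iy)=\sum_{m\ge0}c_m y^m$ for $y$ in some interval $[0,\delta]$, and $A(\alpha+iy)$ is a bounded holomorphic function of $y$ on $[\delta,|\alpha|/4]$.

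It remains to convert this into the stated Mellin-transform assertion. For $\Re(s)>0$, split the integral at $y=\delta$: the contribution of $[\delta,|\alpha|/4]$ is entire in $s$, while
\[
\int_0^\delta\Bigl(\sum_{m\ge0}c_m y^m\Bigr)y^{s-1}\,dy=\sum_{m\ge0}\frac{c_m\,\delta^{s+m}}{s+m}
\]
converges locally uniformly on $\C\setminus\{0,-1,-2,\dots\}$ and provides a meromorphic continuation of $\int_0^{|\alpha|/4}A(\alpha+iy)y^s\frac{dy}y$ to $\C$ with at most simple poles at the non-positive integers. Since $\Gamma_\C(s)^{-1}=(2\pi)^s/\bigl(2\Gamma(s)\bigr)$ is entire and vanishes simply at $s=0,-1,-2,\dots$, the product $\Gamma_\C(s)^{-1}\int_0^{|\alpha|/4}A(\alpha+iy)y^s\frac{dy}y$ is entire, as required. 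The one point that needs genuine care is the contour rotation in the displayed integral — verifying the vanishing of the arcs at infinity and the uniform convergence of the rotated integral all the way down to $z=\alpha$; the remainder is bookkeeping, and the argument parallels the treatment of similar auxiliary integrals in \cite{Boo16} and in the preceding lemmas.
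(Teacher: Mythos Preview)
Your contour-rotation step contains a real gap. Rotating $[1,\infty)$ about the origin to the ray $e^{i\theta}[1,\infty)$ does not just produce an arc at infinity: it also leaves an arc on $|r|=1$ joining $r=1$ to $r=e^{i\theta}$, since the two rays have different initial points. On that small arc the integrand is $g(e^{i\psi})\,\phi(e^{i\psi}z)\,ie^{i\psi}$ with $g(r)=\frac{(1+r^{k-1})\log r}{r-1}$, and at the endpoint $\psi=0$ it involves $\phi(z)=2f(z+a/q)$. For $z$ on or below the real axis the point $z+a/q$ leaves $\H$, and a cusp form has an \emph{essential} singularity at every rational boundary point (e.g.\ at the cusp $0$ one has $f(\delta)=\delta^{-k}f(-1/\delta)\sim\delta^{-k}e^{-2\pi i/\delta}$), so the arc integral does not continue holomorphically to any two-sided neighbourhood of $z=\alpha$. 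In fact the conclusion you need, $A(\alpha+iy)=\sum_{m\ge0}c_m y^m$ with positive radius of convergence, is strictly stronger than the lemma: what the lemma encodes is only the \emph{asymptotic} expansion $A(\alpha+iy)\sim\sum_m c_m y^m$ as $y\to0^+$, and the exponentially flat pieces coming from the cusp prevent convergence for $k\ge2$.

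The paper avoids any attempt to push $A(z)$ across the real axis. Instead it computes the full Mellin transform directly,
\[
\int_0^\infty A(\alpha+iy)\,y^{s}\frac{dy}{y}
=\Gamma_\C(s)\sum_{n\ge1}b_n n^{-s}\int_1^\infty\phi(x)e(\alpha nx)x^{-s}\,dx,
\]
and then integrates the inner oscillatory integral by parts $m$ times. Each integration by parts peels off a boundary term at $x=1$ which, after summing over $n$, is a polynomial in $s$ times an additively twisted $L$-value $L_f(s+j+1,\alpha+a/q)$; these are entire by \cite[Proposition~3.1]{BK11}. The remaining integral is then holomorphic for $\Re(s)>k-m$, and letting $m\to\infty$ gives the claimed entire continuation after dividing by $\Gamma_\C(s)$. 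This is the substitute for your holomorphic-extension claim, and it is genuinely necessary here.
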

\begin{proof}
Let $\Phi(s)=\psi'(s+\tfrac{k-1}2)+\psi'(s-\tfrac{k-1}2)$.
By the identity $\psi'(s)=\int_1^\infty\frac{\log x}{x-1}x^{-s}\,dx$,
we have $\Phi(s)=\int_1^\infty\phi(x)x^{-s-\frac{k-1}2}\,dx$
for $\Re(s)>\frac{k-1}{2}$, where $\phi(x)=\frac{x^{k-1}+1}{x-1}\log{x}$.
It follows that
$$
\Phi(s)\Gamma(s+\tfrac{k-1}{2})=\int_1^\infty\phi(x)x^{-s-\frac{k-1}2}\,dx
\int_0^\infty e^{-y}y^{s+\frac{k-1}2}\,dy
=\int_1^\infty\phi(x)\int_0^\infty e^{-y}
\Big(\frac{y}{x}\Big)^{s+\frac{k-1}{2}}\frac{dy}{y}.
$$
By the variable change $y\mapsto xy$ we obtain
$$
\Phi(s)\Gamma(s+\tfrac{k-1}{2})=
\int_1^\infty\phi(x)\int_0^\infty e^{-xy}y^{s+\frac{k-1}{2}}\frac{dy}{y}
=\int_0^\infty\Big(\int_1^\infty\phi(x)e^{-xy}\,dx\Big)y^{s+\frac{k-1}{2}}\frac{dy}{y}.
$$
By Mellin inversion,
\begin{equation}\label{mellininversion}
\int_1^\infty\phi(x)e^{-xy}\,dx
=\frac1{2\pi i}\int_{\Re(s)=2}\Phi(s)\Gamma(s+\tfrac{k-1}2)y^{-s-\frac{k-1}2}\,ds.
\end{equation}
Observe that $L_{\bar{f}}(s,-\frac{\overline{Na}}{q})
=\sum_{n=1}^\infty b_nn^{-s}$,
where $b_n=\lambda_{\bar{f}}(n)e(-\frac{\overline{Na}}{q})$.
Thus for $z\in\mathbb{H}$,
\begin{align*}
A(z)&=2\sum_{n=1}^\infty b_n\cdot
\frac1{2\pi i}\int_{\Re(s)=2}\Phi(s)
\Gamma(s+\tfrac{k-1}{2})(-2\pi inz)^{-s-\frac{k-1}2}\,ds\\
&=2\sum_{n=1}^\infty b_n\int_1^\infty\phi(x)e^{2\pi inxz}\,dx,
\end{align*}
where the last step follows from \eqref{mellininversion}. For $z=\alpha+iy$
this simplifies to
\begin{equation}\label{eq:Aalphaiy}
A(\alpha+iy)=2\sum_{n=1}^\infty b_n\int_1^\infty\phi(x)
e(\alpha nx)e^{-2\pi nxy}\,dx.
\end{equation}
Using this expression, it follows that
\begin{equation}\label{Amellintransform}
\begin{aligned}
&\int_0^\infty A(\alpha+iy)y^s\frac{dy}{y}
=2\sum_{n=1}^\infty b_n\int_1^\infty\phi(x)e(\alpha nx)
\int_0^\infty e^{-2\pi nxy}y^s\frac{dy}{y}\,dx\\
&=\Gamma_\C(s)\sum_{n=1}^\infty b_nn^{-s}
\int_1^\infty\phi(x)e(\alpha nx)x^{-s}\,dx.
\end{aligned}
\end{equation}

For $j=0,1,2,\ldots$, define the sequence of functions $\phi_j(x,s)$ by
\[
\phi_0(x,s)=\phi(x), \quad
\phi_{j+1}(x,s)=x\frac{\partial\phi_j}{\partial x}(x,s)-(s+j)\phi_j(x,s).
\]
Integrating by parts,
\[
\int_1^\infty\phi_j(x,s)e(\alpha nx)x^{-s-j}\,dx
=-\frac{e(\alpha n)\phi_j(1,s)}{2\pi i\alpha n}
-\frac1{2\pi i\alpha n}\int_1^\infty\phi_{j+1}(x,s)e(\alpha nx)
x^{-s-j-1}\,dx.
\]
Repeated application of this yields
\begin{equation}\label{intparts}
\begin{aligned}
\int_1^\infty\phi(x)e(\alpha nx)x^{-s}\,dx
&=e(\alpha n)\sum_{j=0}^{m-1}\frac{\phi_j(1,s)}{(-2\pi i\alpha n)^{j+1}}\\
&+(-2\pi i\alpha n)^{-m}\int_1^\infty\phi_m(x,s)
e(\alpha nx)x^{-s-m}\,dx
\end{aligned}
\end{equation}
for $m\in\Z_{\ge0}$. By \eqref{Amellintransform} and \eqref{intparts}
it follows that
\begin{align*}
\frac1{\Gamma_\C(s)}
\int_0^\infty A(\alpha+iy)y^s\frac{dy}{y}
&=\sum_{j=0}^{m-1}\frac{\phi_j(1,s)}{(-2\pi i\alpha)^{j+1}}
L_{\bar{f}}(s+j+1,-\tfrac{\overline{Na}}{q}+\alpha)\\
&+(-2\pi i\alpha)^{-m}\sum_{n=1}^\infty\frac{b_n}{n^{s+m}}
\int_1^\infty\phi_m(x,s)e(\alpha nx)x^{-s-m}\,dx.
\end{align*}
Each term in the sum extends to an entire function of $s$, by
\cite[Proposition~3.1]{BK11}. Furthermore,
it may be checked that $\phi_m(x,s)\ll_{m,k}(1+|s|)^mx^{k-1}$. Therefore
the last integral is holomorphic for $\Re(s)>k-m$. Letting
$m\to\infty$ shows that
$\Gamma_\C(s)^{-1}\int_0^\infty A(\alpha+iy)y^s\frac{dy}y$
continues to an entire function.

Finally, from \eqref{eq:Aalphaiy} we see that $A(\alpha+iy)$ decays
exponentially as $y\to\infty$, and hence
$\int_{|\alpha|/4}^\infty A(\alpha+iy)y^s\frac{dy}{y}$ is entire.
This completes the proof.
\end{proof}

\begin{lemma}\label{Blemma}
$\Gamma_\C(s)^{-1}\int_0^{|\alpha|/4}B(\alpha+iy)y^s\frac{dy}y$
continues to an entire function of $s$.
\end{lemma}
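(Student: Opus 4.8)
The plan is to imitate the proof of Lemma~\ref{Alemma}, the only essentially new feature being that the factor $G(s):=\frac{\pi^2}{\sin^2(\pi(s+\frac{k-1}2))}$ is $1$-periodic in $s$ and hence, unlike the $\Phi(s)$ appearing there, is \emph{not} of the form $\int_1^\infty(\mathrm{kernel})\,x^{-s-\frac{k-1}2}\,dx$. To get around this I would use the reflection identity $\psi'(w)+\psi'(1-w)=\frac{\pi^2}{\sin^2\pi w}$, already invoked in the proof of Lemma~\ref{Sfaqz}, to write
\[
G(s)=\psi'\Big(s+\tfrac{k-1}2\Big)+\psi'\Big(\tfrac{3-k}2-s\Big),
\]
and correspondingly $B=B_1+B_2$ with $B_i(z)=\frac1{2\pi i}\int_{\Re(s)=k/2}\Lambda_f(s,\tfrac aq)\,\psi_i'(s)\,(-iz)^{-s-\frac{k-1}2}\,ds$. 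On $\Re(s)=\frac k2$ — in fact for every $\Re(s)>\frac{1-k}2$ — we have $\psi'(s+\frac{k-1}2)=\int_1^\infty\frac{\log x}{x-1}\,x^{-s-\frac{k-1}2}\,dx$, so $B_1$ is exactly of the shape handled in Lemma~\ref{Alemma} with the weight $\phi$ replaced by the bounded function $x\mapsto\frac{\log x}{x-1}$; the estimates there apply verbatim and $\Gamma_\C(s)^{-1}\int_0^{|\alpha|/4}B_1(\alpha+iy)\,y^s\,\frac{dy}y$ continues to an entire function of $s$.

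For $B_2$ the argument $\frac{3-k}2-s$ of $\psi'$ has nonpositive real part on $\Re(s)=\frac k2$ when $k\ge2$, and there is no vertical line on which both $\psi'(\frac{3-k}2-s)$ has its integral representation and $L_f(s,\tfrac aq)$ is absolutely convergent; so I would first pass to the dual side by the Voronoi functional equation $\Lambda_f(s,\tfrac aq)=\epsilon\xi(q)(Nq^2)^{\frac12-s}\Lambda_{\bar f}(1-s,-\tfrac{\overline{Na}}q)$. Substituting $s\mapsto1-s$ and using $\psi'(\frac{3-k}2-(1-s))=\psi'(s-\frac{k-1}2)$ recasts $B_2(z)$ as a constant multiple of $(-iz)^{-\frac{k+1}2}\,\frac1{2\pi i}\int_{\Re(s)=1-k/2}\Lambda_{\bar f}(s,-\tfrac{\overline{Na}}q)\,\psi'(s-\tfrac{k-1}2)\,(Nq^2(-iz))^{s}\,ds$. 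I would then shift this contour rightward to a line $\Re(s)=c$ on which $\psi'(s-\frac{k-1}2)=\int_1^\infty\frac{\log x}{x-1}x^{\frac{k-1}2-s}\,dx$ and $\Lambda_{\bar f}(s,-\tfrac{\overline{Na}}q)$ is an absolutely convergent Dirichlet series (any $c>\max(1,\frac{k-1}2)$ works). Since $\Lambda_{\bar f}(s,-\tfrac{\overline{Na}}q)$ is entire — being the completed $L$-function of an additive twist, by \cite[Proposition~3.1]{BK11} — the only poles crossed are the $k-1$ double poles of $\psi'(s-\frac{k-1}2)$ at $s=\frac{k-1}2-j$, $j=0,\dots,k-2$ (none if $k=1$). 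After multiplication by $(-iz)^{-\frac{k+1}2}$ each such residue has the elementary form $(c_j+c_j'\log(-iz))(-iz)^{-1-j}$; since $y\mapsto-i(\alpha+iy)=y-i\alpha$ is analytic and nonvanishing on $[0,|\alpha|/4]$, the associated $\int_0^{|\alpha|/4}(\cdot)\,y^s\,\frac{dy}y$ is meromorphic in $s$ with at worst simple poles at the nonpositive integers, and these vanish on multiplying by $\Gamma_\C(s)^{-1}$.

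There remains the leftover integral $\tilde B_2(z):=(\mathrm{const})\,(-iz)^{-\frac{k+1}2}\,\frac1{2\pi i}\int_{\Re(s)=c}\Lambda_{\bar f}(s,-\tfrac{\overline{Na}}q)\,\psi'(s-\tfrac{k-1}2)\,(Nq^2(-iz))^{s}\,ds$. Expanding $\Lambda_{\bar f}$ as a Dirichlet series and $\psi'(s-\frac{k-1}2)$ via its integral representation and evaluating the $s$-integral by $\frac1{2\pi i}\int\Gamma(s)v^{-s}\,ds=e^{-v}$ $(\Re v>0)$, one finds
\[
\tilde B_2(z)=\frac{2\epsilon\xi(q)}{(-i\sqrt{N}qz)^k}\int_1^\infty\frac{\log x}{x-1}\,x^{k-1}\,\bar f\Big(-\frac{x}{Nq^2z}-\frac{\overline{Na}}{q}\Big)\,dx,
\]
where $\bar f(\tau)=\sum_{n\ge1}\lambda_{\bar f}(n)n^{\frac{k-1}2}e(n\tau)$. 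This has the same shape as the term $\frac{\epsilon\xi(q)}{(-i\sqrt{N}qz)^k}\overline F(-\frac1{Nq^2z})$ of Lemma~\ref{Sfaqz}, but carries the extra smooth weight $\frac{\log x}{x-1}x^{k-1}\,dx$ and has the modular form $\bar f$ — whose completed additive twists $\Lambda_{\bar f}(\cdot,\beta)$ are \emph{entire} — in place of $\overline F$, whose completed version $\Delta_{\bar f,-\overline{Na},q}$ has poles at the simple zeros of $L_{\bar f}$. I would therefore rerun the analysis of Lemma~\ref{Fbarlemma}: expand the change of variable $z\mapsto-\frac1{Nq^2z}$ about $y=0$ to order $M$, extract the main terms, and bound the remainder by $O_M\big(y^{M-\lfloor\frac{k+3}2\rfloor}\big)$. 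The crucial point is that here each main term is a finite combination of integrals $\int_1^\infty\frac{\log x}{x-1}x^{a_m}\,L_{\bar f}(s+m,-\tfrac{x}{Nq^2\alpha}-\tfrac{\overline{Na}}q)\,dx$ (for explicit exponents $a_m$) times elementary factors, and each of these is entire in $s$ by the oscillatory-integral/integration-by-parts argument of Lemma~\ref{Alemma} applied to the $x$-variable, while the remainder contributes a function holomorphic on $\Re(s)>\lfloor\frac{k+3}2\rfloor-M$. Letting $M\to\infty$ shows $\Gamma_\C(s)^{-1}\int_0^{|\alpha|/4}\tilde B_2(\alpha+iy)\,y^s\,\frac{dy}y$ is entire; together with the treatments of $B_1$ and of the residue terms, this proves the lemma.

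The genuinely hard part is this last step, which fuses the perturbative treatment of the $\overline F$-term (Lemma~\ref{Fbarlemma}) with the oscillatory-integral treatment of an additive twist (Lemma~\ref{Alemma}). Everything else — the contour shifts and the interchanges of summation with integration — is routine, justified, as throughout this section, by the exponential decay of the $\Gamma$-factor on vertical lines against the polynomial growth of the additive twists $L_f(s,\cdot)$.
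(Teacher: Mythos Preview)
Your approach is quite different from the paper's and is considerably more laborious, though it appears to be essentially correct. The paper's proof is a few lines: it exploits the fact that the factor $\dfrac{\pi^2}{\sin^2(\pi(s+\frac{k-1}2))}$ decays like $e^{-2\pi|t|}$ on vertical lines. This extra exponential decay dominates everything else in the integrand, so one may simply Taylor-expand $(-i(\alpha+iy))^{-s-\frac{k-1}2}=(-i\alpha)^{-s-\frac{k-1}2}(1+iy/\alpha)^{-s-\frac{k-1}2}$ in powers of $y$ under the contour integral. Each coefficient is an absolutely convergent integral (the polynomial growth of $\binom{-s-\frac{k-1}2}{j}$ in $s$ being harmless against $e^{-2\pi|t|}$), giving $B(\alpha+iy)=\sum_{j=0}^{M-1}P_j(\alpha)y^j+O_M(y^M)$ for every $M$. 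The truncated Mellin transform of such a function has only simple poles at the nonpositive integers, which vanish upon division by $\Gamma_\C(s)$.

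You missed this decay property of $\sin^{-2}$ and instead undid it via the reflection identity, replacing one rapidly decaying weight by two slowly decaying $\psi'$ terms. Your treatment of $B_1$ and of the pole contributions from the contour shift is fine. For $\tilde B_2$ you are effectively proving that $(-iz)^{-k}\tilde A(-1/(Nq^2z))$ has the right Mellin-pole structure, where $\tilde A$ is an $A$-type integral; your sketch (Lemma~\ref{Fbarlemma} in $y$, then Lemma~\ref{Alemma} in $x$) is plausible and can be made to work, since the Fbarlemma expansion with $\beta$ replaced by $x\beta$ produces an extra factor of $x^{-s-m-\frac{k-1}2}$ in each main term, so the resulting $x$-integral is genuinely of the Alemma shape with an $x^{-s}$ weight. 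But this ``fusion'' step, which you flag as the hard part, requires checking that the Fbarlemma remainder estimate is uniform in $x$ after multiplication by $e^{-2\pi n x|\beta u|}$, and then that the combined $(M,M')\to\infty$ limits cover the whole plane. None of this is needed in the paper's argument, where the $1/\sin^2$ decay gives smoothness of $B(\alpha+iy)$ at $y=0$ directly and there is no hard step at all.
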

\begin{proof}
Following the proof of \cite[Lemma~3.4]{Boo16}, we obtain
$$
B(\alpha+iy)=\sum_{j=0}^{M-1}P_j(\alpha)y^j+O_M(y^M)
\quad\text{for all }M\in\Z_{\ge0}, y\in\bigl(0,\tfrac{|\alpha|}4\bigr],
$$
where
$$
P_j(\alpha)=\frac{(-i\alpha)^{-j}}{2\pi i}\int_{\Re(s)=\frac{k}2}
e^{i\frac{\pi}2\sgn(\alpha)(s+\frac{k-1}2)}|\alpha|^{-s-\frac{k-1}2}
\binom{-s-\frac{k-1}2}{j}\Lambda_f(s,\tfrac{a}{q})
\frac{\pi^2}{\sin^2(\pi(s+\frac{k-1}2))}\,ds.
$$
Hence,
$$
\int_0^{|\alpha|/4}B(\alpha+iy)y^s\frac{dy}{y}
-\sum_{j=0}^{M-1}P_j(\alpha)\frac{|\alpha/4|^{s+j}}{s+j}
$$
is holomorphic for $\Re(s)>-M$. Note that the sum over $j$ is entire
apart from at most simple poles at the poles of $\Gamma_\C(s)$.
Dividing by $\Gamma_\C(s)$ and taking $M\to\infty$ concludes the proof.
\end{proof}

\begin{proof}[Proof of Proposition~\ref{prop:Mellin}]
Combining Lemmas~\ref{Sfaqz}--\ref{Blemma} and taking $M=1$,
we see that $I_{f,a,q,\alpha}(s)-H_{f,a,q,\alpha}(s)$ has analytic
continuation to $\Re(s)>0$. If
$\int_0^{|\alpha|/4}|S_{f,a,q}(y,\alpha)|
y^{\sigma+\frac{k-1}2}\frac{dy}{y}<\infty$ for some
$\sigma\ge0$, then the integral defining $I_{f,a,q,\alpha}(s)$ converges
absolutely for $\Re(s)>\sigma$, and hence $I_{f,a,q,\alpha}(s)$ is
holomorphic in that region.
\end{proof}

\section{Estimates for $\Ns_{f,a,q}(T)$}
Fix $f,a,q$ as in Proposition~\ref{voronoi}, and let
$\alpha\in\Q^\times$. In this section, we derive estimates for
$\Ns_{f,a,q}(T)$ based on Proposition~\ref{prop:Mellin}.
\begin{lemma}\label{GLprimebound}
Let $f\in S_k(\Gamma_1(N))$ be a primitive form.
For $\rho=\beta+i\gamma$ a zero of $\Lambda_f(s)$, we have
\begin{equation}\label{compLfncboundrho}
\Lambda_f'(\rho)\ll_f
(2+|\gamma|)^{\frac{k}{2}+\frac{|\beta-\frac12|}{3}-\frac16}\log^2(2+|\gamma|)
e^{-\frac{\pi}{2}|\gamma|}.
\end{equation}
\end{lemma}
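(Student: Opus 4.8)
The plan is to write $\Lambda_f'(\rho)$ as a contour integral of $\Lambda_f$ over a small circle about $\rho$ and to bound $\Lambda_f$ on that circle, combining Stirling's formula for the archimedean factor with a subconvexity-type bound for $L_f$ throughout the critical strip. First note that, since $f$ is cuspidal, $\Lambda_f$ is entire and its zeros are exactly the nontrivial zeros of $L_f$; by the Euler product for $\Re(s)>1$, the functional equation, and the nonvanishing of $L_f$ on $\Re(s)\ge1$ \cite{JS76}, every zero $\rho=\beta+i\gamma$ satisfies $0\le\beta\le1$. Fix a large constant $C$; there are only finitely many zeros with $|\gamma|\le C$, and for these \eqref{compLfncboundrho} is immediate, since its right-hand side is bounded below by a positive constant depending on $f$. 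So I may assume $|\gamma|\ge C$, and then $2+|\gamma|\asymp|\gamma|$.

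The one substantive input is a uniform bound for $L_f$ across the strip. Starting from the standard estimate $|L_f(\sigma+it)|\ll_f\log(2+|t|)$ for $\sigma\ge1$, $|t|\ge1$ (hence, by absolute convergence, on $1\le\sigma\le1+\tfrac1{10}$), the functional equation $\Lambda_f(s)=\epsilon N^{\frac12-s}\Lambda_{\bar f}(1-s)$ together with Stirling's formula for the ratio of $\Gamma_\C$-factors gives $|L_f(\sigma+it)|\ll_f(2+|t|)^{1-2\sigma}\log(2+|t|)$ for $\sigma\le0$; and the subconvexity estimate of \cite{BMN19}, which I use in the form $|L_f(\tfrac12+it)|\ll_f(2+|t|)^{1/3}\log(2+|t|)$, supplies the central value. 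Applying the Phragm\'en--Lindel\"of principle separately in the strips $\tfrac12\le\Re(s)\le1$ and $0\le\Re(s)\le\tfrac12$ then yields $|L_f(\sigma+it)|\ll_f(2+|t|)^{\tilde h(\sigma)}\log(2+|t|)$ for $-\tfrac1{10}\le\sigma\le1+\tfrac1{10}$ and $|t|\ge1$, where $\tilde h$ is the continuous piecewise-linear function with $\tilde h(\sigma)=\tfrac23(1-\sigma)$ on $[\tfrac12,1]$, $\tilde h(\sigma)=1-\tfrac43\sigma$ on $[0,\tfrac12]$, $\tilde h\equiv0$ on $[1,1+\tfrac1{10}]$, and $\tilde h(\sigma)=1-2\sigma$ on $[-\tfrac1{10},0]$. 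Two elementary facts about $\tilde h$ will be used: the function $\sigma\mapsto\sigma+\tfrac k2-1+\tilde h(\sigma)$ has all slopes in $[-1,1]$, hence is $1$-Lipschitz on $[-\tfrac1{10},1+\tfrac1{10}]$; and for $0\le\sigma\le1$ it equals exactly $\tfrac k2+\tfrac{|\sigma-1/2|}3-\tfrac16$.

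To finish, I apply Cauchy's formula with radius $r=1/\log(2+|\gamma|)\le\tfrac1{10}$:
\begin{equation*}
\Lambda_f'(\rho)=\frac1{2\pi i}\oint_{|s-\rho|=r}\frac{\Lambda_f(s)}{(s-\rho)^2}\,ds,\qquad\text{so}\qquad|\Lambda_f'(\rho)|\le r^{-1}\max_{|s-\rho|=r}|\Lambda_f(s)|.
\end{equation*}
On the circle, $s=\sigma+it$ with $|\sigma-\beta|\le r$ and $|t-\gamma|\le r$, so $2+|t|\asymp2+|\gamma|$, and Stirling gives $|\Gamma_\C(s+\tfrac{k-1}2)|\ll_k(2+|\gamma|)^{\sigma+\frac k2-1}e^{-\frac\pi2|\gamma|}$ (using $|t|\ge|\gamma|-r$ and $e^{\frac\pi2 r}\ll1$). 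Multiplying this by the bound for $|L_f(s)|$ and using the $1$-Lipschitz property to pass from the exponent at $\sigma$ to the one at $\beta$---at the cost of the bounded factor $(2+|\gamma|)^r=e$---we obtain $\max_{|s-\rho|=r}|\Lambda_f(s)|\ll_f(2+|\gamma|)^{\frac k2+\frac{|\beta-1/2|}3-\frac16}e^{-\frac\pi2|\gamma|}\log(2+|\gamma|)$, and multiplying by $r^{-1}=\log(2+|\gamma|)$ gives \eqref{compLfncboundrho}. The only nonroutine ingredient is the subconvexity bound imported from \cite{BMN19}: with the convexity bound alone one would obtain merely the exponent $\tfrac k2$, the saving $\tfrac{|\beta-1/2|}3-\tfrac16\le0$ being precisely what subconvexity provides; the remaining care is purely bookkeeping, the small radius $r\asymp1/\log(2+|\gamma|)$ being chosen so that the exponent is essentially constant over the circle while $r^{-1}$ costs only a single extra factor of $\log(2+|\gamma|)$.
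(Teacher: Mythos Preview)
Your proof is correct and uses the same ingredients as the paper---the subconvexity bound from \cite{BMN19}, Phragm\'en--Lindel\"of, Stirling, and Cauchy's integral formula---but assembles them in a slightly different order. The paper first uses Cauchy on $L_f$ near the critical line to bound $L_f'(\tfrac12+it)$, then applies Phragm\'en--Lindel\"of to $L_f'$ on $[\tfrac12,1]$ (which requires a separate estimate for $L_f'(1+\varepsilon+it)$ via Rankin's bound), multiplies by $\Gamma_\C$, and finally handles $\beta<\tfrac12$ by differentiating the functional equation of $\Lambda_f$. You instead bound $L_f$ uniformly across the whole strip (absorbing the functional equation at the level of $L_f$ rather than $\Lambda_f'$), multiply by $\Gamma_\C$ to bound $\Lambda_f$, and then apply Cauchy once to $\Lambda_f$ with radius $r\asymp1/\log(2+|\gamma|)$. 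Your route is marginally more streamlined: it avoids the auxiliary bound on $L_f'$ at the $1$-line and the case split on $\beta\gtrless\tfrac12$, the Lipschitz observation about the exponent doing that work automatically. The paper's route has the minor advantage of making the bound on $L_f'$ in the right half-strip explicit, which could be reused elsewhere.
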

\begin{proof}
We begin by establishing, for $s=\sigma+it$ and $\sigma\in[\frac12,1]$,
\begin{equation}\label{compLfncbound}
\Gamma_\C(s+\tfrac{k-1}{2})L_f'(s) \ll
\tau^{\frac{k}{2}-\frac{1-\sigma}{3}}
e^{-\frac{\pi}{2}|t|}\log^2\tau,
\end{equation}
where $\tau=|t|+2$. By \cite[Theorem 1.1]{BMN19}, we have
$$
L_f(\tfrac12+it)\ll\tau^{\frac13}\log\tau.
$$
By the Phragm\'{e}n--Lindel\"{o}f principle, using
$$
L_f\!\left(-\frac1{\log\tau}+it\right)\ll\tau\log\tau
\quad\text{and}\quad
L_f\!\left(1+\frac1{\log \tau}+it\right)\ll\log\tau,
$$
it follows that
$L_f(\sigma+it)\ll\tau^{\frac13}\log\tau$
when $|\sigma-\tfrac12|\le1/\log\tau$.
An application of the Cauchy integral formula then
yields $L_f'(\tfrac12+it)\ll\tau^{\frac13}\log^2\tau$.
By Cauchy's inequality and Rankin's estimate
$\sum_{n\le x}|\lambda_f(n)|^2\ll x$, we get
$$
|L_f'(1+\varepsilon+it)|
\le\sum_{n=1}^\infty\frac{|\lambda_f(n)|\log n}{n^{1+\varepsilon}}
\le\Big(\sum_{n=1}^\infty\frac{|\lambda_f(n)|^2}{n^{1+\varepsilon}}\Big)^{\frac12}
\zeta''(1+\varepsilon)^{\frac12}
\ll\varepsilon^{-\frac12}\varepsilon^{-\frac32}=\varepsilon^{-2}
$$
for $\varepsilon>0$.
Another application of the Phragm\'{e}n--Lindel\"{o}f principle yields
$$
L_f'(\sigma+it)\ll\tau^{\frac23(1-\sigma)}\log^2\tau
$$
for $\sigma\in[\frac12,1]$. This, together with the Stirling formula estimate
$$
\Gamma_\C(s+\tfrac{k-1}{2})\ll \tau^{\sigma+\tfrac{k}{2}-1}
e^{-\frac{\pi}{2}|t|},
$$
yields \eqref{compLfncbound}.

Setting $s=\rho=\beta+i\gamma$ with
$\beta\ge\frac12$ in \eqref{compLfncbound} gives
\begin{equation}\label{lambdafprimerhochi}
\Lambda_f'(\rho)
=\Gamma_\C(\rho+\tfrac{k-1}{2})L_f'(\rho)
\ll(2+|\gamma|)^{\frac{k}{2}-\frac{1-\beta}{3}}
\log^2(2+|\gamma|)e^{-\frac{\pi}{2}|\gamma|}.
\end{equation}
Now suppose $\beta<\frac12$.
Differentiating the functional equation we obtain
$$
\Lambda_f'(\rho)=-\epsilon N^{\frac12-\rho}\Lambda_{\bar{f}}'(1-\rho),
$$
where $|\epsilon|=1$.  Applying \eqref{lambdafprimerhochi} to
$\Lambda_{\bar{f}}'(1-\rho)$ it follows that
\begin{equation}\label{lambdafprimerhochi2}
\Lambda_f'(\rho)
\ll(2+|\gamma|)^{\frac{k}{2}-\frac{\beta}{3}}
\log^2(2+|\gamma|)e^{-\frac{\pi}{2}|\gamma|}.
\end{equation}
Combining \eqref{lambdafprimerhochi} and \eqref{lambdafprimerhochi2} we obtain
\eqref{compLfncboundrho}.
\end{proof}

\begin{lemma}\label{lem:Nathan}
For any fixed $\varepsilon>0$ and all $\sigma\in[\varepsilon,2]$,
$$
\int_0^{\frac{|\alpha|}{4}}|S_{f,a,q}(y,\alpha)|y^{\sigma+\frac{k-1}{2}}\frac{dy}{y}\ll
\sum_{\substack{\rho=\beta+i\gamma\\\text{a pole of }\Delta_{f,a,q}^*(s)}}
(2+|\gamma|)^{\frac{1+|\beta-\frac12|}3-\sigma}\log^2(2+|\gamma|).
$$
\end{lemma}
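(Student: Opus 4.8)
The plan is to bound $|S_{f,a,q}(y,\alpha)|$ by inserting the residue expansion \eqref{Sydefn}, applying the triangle inequality, and integrating term by term; by Tonelli's theorem this is legitimate, with the understanding that there is nothing to prove if the resulting series over $\rho$ diverges. Thus it is enough to show, for each pole $\rho=\beta+i\gamma$ of $\Delta_{f,a,q}^*(s)$ and uniformly for $\sigma\in[\varepsilon,2]$,
$$
\Bigl|\Res{s=\rho}\Delta_{f,a,q}^*(s)\Bigr|\int_0^{|\alpha|/4}\bigl|(y-i\alpha)^{-\rho-\frac{k-1}2}\bigr|y^{\sigma+\frac{k-1}2}\frac{dy}{y}\ll(2+|\gamma|)^{\frac{1+|\beta-\frac12|}3-\sigma}\log^2(2+|\gamma|),
$$
with implied constant depending on $f,a,q,\alpha,\varepsilon$; summing this over $\rho$ gives the lemma.

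For the residue, Proposition~\ref{voronoi} places $\rho$ in the open critical strip, where $\psi'(s+\tfrac{k-1}2)\Lambda_f(s,\tfrac aq)$ is holomorphic, so $\Res{s=\rho}\Delta_{f,a,q}^*(s)=\Res{s=\rho}\Delta_{f,a,q}(s)$; by \eqref{Deltafaq} the latter is a linear combination, with coefficients bounded in terms of $q$, of $\Res{s=\rho}\Delta_f(s)$ and $\Res{s=\rho}\Delta_f(s,\chi)$ over the nontrivial characters $\chi$ modulo the prime $q$. Since $D_f(s)=L_f(s)(\log L_f)''(s)$ has only simple poles, at the simple zeros of $L_f$, with residue $-L_f'$ there, and since $\chi$ is primitive modulo $q\nmid N$ so that $L_f(s,\chi)=L_{f\otimes\chi}(s)$ and likewise $D_f(s,\chi)=D_{f\otimes\chi}(s)$, the residues in question equal $-\Lambda_f'(\rho)$ if $\rho$ is a simple zero of $\Lambda_f$ and $-\Lambda_{f\otimes\chi}'(\rho)$ if $\rho$ is a simple zero of $\Lambda_{f\otimes\chi}$, and vanish otherwise. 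Applying Lemma~\ref{GLprimebound} to $f$ and to each of the finitely many forms $f\otimes\chi$ then gives
$$
\Bigl|\Res{s=\rho}\Delta_{f,a,q}^*(s)\Bigr|\ll(2+|\gamma|)^{\frac k2+\frac{|\beta-\frac12|}3-\frac16}\log^2(2+|\gamma|)\,e^{-\frac\pi2|\gamma|}.
$$

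For the integral, fix $0<y\le|\alpha|/4$ and set $z=\alpha+iy$. Then $|y-i\alpha|\asymp_\alpha1$ and $\arg(y-i\alpha)=-\sgn(\alpha)\arctan(|\alpha|/y)$, so (using $\beta>0$) $\bigl|(y-i\alpha)^{-\rho-\frac{k-1}2}\bigr|=|y-i\alpha|^{-\beta-\frac{k-1}2}e^{\gamma\arg(y-i\alpha)}\ll_{\alpha,k}e^{\gamma\arg(y-i\alpha)}$. The point now is that on $(0,|\alpha|/4]$ we have $\arctan(|\alpha|/y)=\tfrac\pi2-\arctan(y/|\alpha|)\le\tfrac\pi2-c\,y/|\alpha|$ for some absolute constant $c>0$ (by concavity of $\arctan$); combined with $e^{\gamma\arg(y-i\alpha)}\le1$ in the case $\sgn(\alpha)\gamma\ge0$, this yields $\gamma\arg(y-i\alpha)\le\tfrac\pi2|\gamma|-c|\gamma|y/|\alpha|$ on the whole range. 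Hence, uniformly for $\sigma\in[\varepsilon,2]$,
$$
\int_0^{|\alpha|/4}\bigl|(y-i\alpha)^{-\rho-\frac{k-1}2}\bigr|y^{\sigma+\frac{k-1}2}\frac{dy}{y}\ll_{\alpha,k}e^{\frac\pi2|\gamma|}\int_0^{|\alpha|/4}e^{-c|\gamma|y/|\alpha|}y^{\sigma+\frac{k-1}2-1}\,dy\ll_{\alpha,k,\varepsilon}e^{\frac\pi2|\gamma|}(2+|\gamma|)^{-\sigma-\frac{k-1}2},
$$
where for large $|\gamma|$ one extends the $y$-integral to $(0,\infty)$ and evaluates it as $\Gamma(\sigma+\tfrac{k-1}2)(c|\gamma|/|\alpha|)^{-\sigma-\frac{k-1}2}$, while for bounded $|\gamma|$ the integral is $\ll_{\alpha,k,\varepsilon}1$ and $(2+|\gamma|)^{-\sigma-\frac{k-1}2}$ is bounded below.

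Multiplying the last two displays, the factors $e^{\pm\frac\pi2|\gamma|}$ cancel and the exponent of $(2+|\gamma|)$ becomes $\tfrac k2-\tfrac{k-1}2-\tfrac16+\tfrac{|\beta-\frac12|}3-\sigma=\tfrac{1+|\beta-\frac12|}3-\sigma$, which is the claimed per-pole bound. I expect no single step to be genuinely hard; the real work is bookkeeping — pinning down the residues of $\Delta_{f,a,q}$ and reducing to Lemma~\ref{GLprimebound} through the clean-twist identity $L_f(s,\chi)=L_{f\otimes\chi}(s)$, and, in the $y$-integral, extracting exactly the factor $(2+|\gamma|)^{-\sigma-\frac{k-1}2}$, for which one must exploit the linear decay of $\tfrac\pi2-\arctan(|\alpha|/y)$ as $y\to0$; the cruder bound $|\arg(y-i\alpha)|<\tfrac\pi2$ alone would be off by a factor $(2+|\gamma|)^{\sigma+\frac{k-1}2}$.
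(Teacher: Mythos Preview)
Your proposal is correct and follows essentially the same approach as the paper: bound each residue via \eqref{Deltafaq} and Lemma~\ref{GLprimebound}, bound $|(y-i\alpha)^{-\rho-\frac{k-1}2}|$ using the linear lower bound $\arctan(y/|\alpha|)\ge c\,y/|\alpha|$ on $(0,|\alpha|/4]$, and evaluate the resulting $y$-integral as a Gamma integral. The only organizational difference is that the paper first combines the residue bound with the pointwise bound on $(y-i\alpha)^{-\rho-\frac{k-1}2}$ to obtain $|S_{f,a,q}(y,\alpha)|\ll\sum_\rho\tau^{\cdots}e^{-c|\gamma|y}$ and then integrates, whereas you integrate each term separately and multiply at the end; the exponential factors $e^{\pm\frac\pi2|\gamma|}$ cancel the same way in both arrangements.
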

\begin{proof}
Throughout this proof we let $\rho=\beta+i\gamma$ denote a pole of
$\Delta_{f,a,q}^*(s)$, and we set $\tau=2+|\gamma|$.
Recalling \eqref{Sydefn}, observe that
$(y-i\alpha)^{-\rho-\frac{k-1}2}=e^{i\frac{\pi}{2}\sgn(\alpha)(\rho+\frac{k-1}2)}
|\alpha|^{-\rho-\frac{k-1}2}(1+\frac{iy}{\alpha})^{-\rho-\frac{k-1}2}$
and
\begin{equation}\label{1plusyalpha}
\begin{aligned}
\bigl|(1+i\tfrac{y}{\alpha})^{-(\beta+i\gamma+\frac{k-1}2)}\bigr|
&=\bigl|e^{-(\frac12\log(1+(y/\alpha)^2)+i\arctan(y/\alpha))(\beta+i\gamma+\frac{k-1}2)}\bigr|\\
&=\bigl(1+(\tfrac{y}{\alpha})^2\bigr)^{-\frac{\beta}{2}-\frac{k-1}4}
e^{\gamma\arctan(y/\alpha)}.
\end{aligned}
\end{equation}
Therefore
\begin{equation}\label{yalphabound}
(y-i\alpha)^{-\rho-\frac{k-1}2}
\ll e^{\gamma\sgn(\alpha)(\arctan(y/|\alpha|)-\frac{\pi}2)}.
\end{equation}
Next, we treat the residue in \eqref{Sydefn}.
By \eqref{Deltafaq}, the poles of $\Delta_{f,a,q}^*(s)$ arise from
poles of $\Delta_f(s)$ and $\Delta_f(s,\chi)$ with $\chi\ne\chi_0$.
The contributrion of an individual term of \eqref{Deltafaq} to
$\Res{s=\rho}\Delta_{f,a,q}^*(s)$, if nonzero, is of the form
$$
-\Big(1-\frac{q}{q-1}P_{f,q}(q^{-\rho})\Big)\Lambda_f'(\rho)
\quad\text{or}\quad
-\frac{\tau(\overline{\chi})\chi(a)}{q-1}\Lambda_f'(\rho,\chi).
$$
Applying Lemma~\ref{GLprimebound} (possibly replacing $f$ by
$f\otimes\chi$) to each of these expressions, it follows that
\begin{equation}\label{residue}
\Res{s=\rho}\Delta_{f,a,q}^*(s)
\ll\tau^{\frac{k}2+\frac{|\beta-\frac12|}3-\frac16}(\log^2\tau)
e^{-\frac{\pi}{2}|\gamma|}.
\end{equation}
It follows from \eqref{Sydefn}, \eqref{yalphabound}, and \eqref{residue}  that
$$
S_{f,a,q}(y,\alpha)\ll\sum_\rho
\tau^{\frac{k}2+\frac{|\beta-\frac12|}3-\frac16}(\log^2\tau)
e^{|\gamma|[\sgn(\alpha\gamma)\arctan(y/|\alpha|)-\frac{\pi}2(1+\sgn(\alpha\gamma))]}.
$$
By considering cases and using the bound $\arctan{u}\ge\frac{u}{2}$
for $0\le u\le\frac14$, we have
$$
S_{f,a,q}(y,\alpha)\ll\sum_\rho
\tau^{\frac{k}2+\frac{|\beta-\frac12|}3-\frac16}(\log^2\tau)e^{-c|\gamma|y}
\quad\text{for }y\in\bigl(0,\tfrac{|\alpha|}4\bigr],
$$
where $c=\frac1{2|\alpha|}>0$. We deduce from this
$$
\int_0^{\frac{|\alpha|}{4}}|S_{f,a,q}(y,\alpha)|
y^{\sigma+\frac{k-1}{2}}\frac{dy}{y}
\ll\sum_\rho
\tau^{\frac{k}2+\frac{|\beta-\frac12|}3-\frac16}\log^2\tau
\int_0^{\frac{|\alpha|}{4}}e^{-c|\gamma|y}y^{\sigma+\frac{k-1}{2}}\frac{dy}{y}.
$$
Now
$$
\int_0^{\frac{|\alpha|}{4}}e^{-c|\gamma|y}y^{\sigma+\frac{k-1}{2}}\frac{dy}{y}
\ll\int_0^{\frac{|\alpha|}{4}}e^{-c\tau y}y^{\sigma+\frac{k-1}{2}}\frac{dy}{y}
\le\int_0^\infty e^{-c\tau y}y^{\sigma+\frac{k-1}{2}}\frac{dy}{y}.
$$
By the variable change $u=c\tau y$, the last integral equals
$$
\frac1{(c\tau)^{\sigma+\frac{k-1}{2}}}\int_0^\infty e^{-u}u^{\sigma+\frac{k-1}{2}}\frac{du}{u}
=\frac{\Gamma(\sigma+\frac{k-1}{2})}{(c\tau)^{\sigma+\frac{k-1}{2}}}
\ll\tau^{-\sigma-\frac{k-1}2},
$$
and thus
$$
\int_0^{\frac{|\alpha|}{4}}S_{f,a,q}(y,\alpha)y^{\sigma+\frac{k-1}{2}}\frac{dy}{y}
\ll\sum_\rho
\tau^{\frac{1+|\beta-\frac12|}3-\sigma}\log^2\tau.
$$
\end{proof}

For a meromorphic function $h$ on $\{s\in\C:\Re(s)>1\}$, define
$$
\Theta(h)=\inf\bigl\{\theta\ge0:h\text{ continues analytically to }
\{s\in\C:\Re(s)>\theta\}\bigr\}.
$$
We also set
$$
\theta_{f,a,q}(T)=\sup\bigl(\{0\}\cup\bigl\{\Re(\rho),1-\Re(\rho):
\rho\in\C, |\Im(\rho)|\le T, \Res{s=\rho}\Delta_{f,a,q}^*(s)\ne0\bigr\}\bigr)
$$
and
$$
\theta_{f,a,q}=\lim_{T\to\infty}\theta_{f,a,q}(T).
$$
By Proposition~\ref{voronoi}, we have
\begin{equation}\label{thetafaq}
\theta_{f,a,q}=\max(\Theta(\Delta_{f,a,q}),\Theta(\Delta_{\bar{f},-\overline{Na},q})).
\end{equation}

\begin{proposition}\label{NfaqTlowerbound}
If $\Theta(H_{f,a,q,\alpha})>0$ then $\theta_{f,a,q}\ge\frac12$ and
\begin{equation}\label{eq:omega1}
\Ns_{f,a,q}(T)=
\Omega\bigl(T^{\frac13(1-\theta_{f,a,q})+\Theta(H_{f,a,q,\alpha})
-\frac12-\varepsilon}\bigr)
\quad\text{for all }\varepsilon>0.
\end{equation}
Further, if $\Theta(H_{f,a,q,\alpha})=\frac12$ and
$H_{f,a,q,\alpha}(s)$ has a pole with real part $\frac12$, then
\begin{equation}\label{eq:omega2}
\Ns_{f,a,q}(T)=\Omega\!\left(
\frac{T^{\frac13(1-\theta_{f,a,q}(T))}}{(1-\theta_{f,a,q}(T))\log^2{T}}\right),
\end{equation}
and there are arbitrarily large $T>0$ such that
\begin{equation}\label{eq:omega3}
\Ns_{f,a,q}(T)\ge\log\log\log{T}.
\end{equation}
\end{proposition}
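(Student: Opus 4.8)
My plan is to extract all three assertions of Proposition~\ref{NfaqTlowerbound} from Proposition~\ref{prop:Mellin} together with Lemma~\ref{lem:Nathan}. First observe that if $\Delta_{f,a,q}^*(s)$ had no poles at all, then $S_{f,a,q}(y,\alpha)\equiv 0$ by \eqref{Sydefn}, so $I_{f,a,q,\alpha}\equiv 0$, and Proposition~\ref{prop:Mellin} would force $H_{f,a,q,\alpha}(s)=-\bigl(I_{f,a,q,\alpha}(s)-H_{f,a,q,\alpha}(s)\bigr)$ to continue analytically to $\Re(s)>0$; this contradicts both the hypothesis $\Theta(H_{f,a,q,\alpha})>0$ and the hypothesis that $H_{f,a,q,\alpha}$ has a pole with real part $\frac12$. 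Hence $\Delta_{f,a,q}^*$ has at least one pole $\rho$, and since $\max(\Re\rho,1-\Re\rho)\ge\frac12$ this already gives $\theta_{f,a,q}\ge\frac12$.

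For \eqref{eq:omega1}, since smaller $\varepsilon$ gives a stronger statement it suffices to treat $\varepsilon<\Theta(H_{f,a,q,\alpha})$, and I suppose for contradiction that $\Ns_{f,a,q}(T)=o(T^{\eta})$ with $\eta=\frac13(1-\theta_{f,a,q})+\Theta(H_{f,a,q,\alpha})-\frac12-\varepsilon$. By Proposition~\ref{voronoi} each pole $\rho=\beta+i\gamma$ has $\beta\in(0,1)$ with $|\Im\rho|$ at most its own height, so $|\beta-\frac12|\le\theta_{f,a,q}-\frac12$. Splitting the sum in Lemma~\ref{lem:Nathan} into dyadic ranges $2^{j}\le 2+|\gamma|<2^{j+1}$, the $j$-th range contributes $\ll\Ns_{f,a,q}(2^{j+1})\,2^{j(\frac16+\frac{\theta_{f,a,q}}3-\sigma)}(j+1)^2$; since $\eta+\frac16+\frac{\theta_{f,a,q}}3=\Theta(H_{f,a,q,\alpha})-\varepsilon$, any fixed $\sigma$ with $\Theta(H_{f,a,q,\alpha})-\varepsilon<\sigma<\Theta(H_{f,a,q,\alpha})$ (such $\sigma$ satisfies $0<\sigma<1$) makes the whole sum converge, whence $\int_0^{|\alpha|/4}|S_{f,a,q}(y,\alpha)|y^{\sigma+\frac{k-1}2}\frac{dy}{y}<\infty$ by Lemma~\ref{lem:Nathan}. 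Proposition~\ref{prop:Mellin} then makes $H_{f,a,q,\alpha}$ holomorphic on $\Re(s)>\sigma$, i.e. $\Theta(H_{f,a,q,\alpha})\le\sigma<\Theta(H_{f,a,q,\alpha})$, a contradiction.

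For \eqref{eq:omega2} and \eqref{eq:omega3} we now have $\Theta(H_{f,a,q,\alpha})=\frac12$ and a pole of $H_{f,a,q,\alpha}$ on the line $\Re(s)=\frac12$. That pole lies in $\{\Re(s)>\sigma\}$ for every $\sigma<\frac12$, so the contrapositive of Proposition~\ref{prop:Mellin} gives $\int_0^{|\alpha|/4}|S_{f,a,q}(y,\alpha)|y^{\sigma+\frac{k-1}2}\frac{dy}{y}=\infty$ for all $\sigma<\frac12$, and then Lemma~\ref{lem:Nathan} yields
\begin{equation}\label{eq:plandiv}
\sum_{\rho}(2+|\gamma|)^{\frac{1+|\beta-\frac12|}3-\sigma}\log^2(2+|\gamma|)=\infty\qquad\text{for every }\sigma<\tfrac12,
\end{equation}
the sum over poles $\rho=\beta+i\gamma$ of $\Delta_{f,a,q}^*$. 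Writing $\nu(T)=\sum_{2+|\gamma|\le T}(2+|\gamma|)^{\frac{1+|\beta-\frac12|}3}\log^2(2+|\gamma|)$, the series in \eqref{eq:plandiv} is $\int_{2^-}^{\infty}T^{-\sigma}\,d\nu(T)$, whose divergence for all $\sigma<\frac12$ says $\nu$ has abscissa of convergence $\ge\frac12$, i.e. $\nu(T)\ge T^{1/2-\varepsilon}$ for arbitrarily large $T$ and every $\varepsilon>0$. On the other hand $\frac{1+|\beta-\frac12|}3=\frac12-\frac13\min(\beta,1-\beta)\le\frac16+\frac{\theta_{f,a,q}(T)}3$ for poles with $|\gamma|\le T$ (since $\min(\beta,1-\beta)\ge 1-\theta_{f,a,q}(|\gamma|)$), so $\nu(T)\ll\Ns_{f,a,q}(T)\,T^{\frac16+\frac{\theta_{f,a,q}(T)}3}\log^2 T=\Ns_{f,a,q}(T)\,T^{\frac12-\frac13(1-\theta_{f,a,q}(T))}\log^2 T$, giving $\Ns_{f,a,q}(T)\gg\nu(T)\,T^{\frac13(1-\theta_{f,a,q}(T))-\frac12}\log^{-2}T$. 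Combined with the abscissa bound this already gives \eqref{eq:omega2} up to a factor $T^{-\varepsilon}$; the sharp version, with $\varepsilon$ removed and the factor $(1-\theta_{f,a,q}(T))^{-1}$ inserted, comes from a finer analysis of $\nu(T)$ near its abscissa --- equivalently, from coupling the choice of $\sigma$ to $T$ (taking $\sigma=\frac12-c/\log T$ and optimising $c$) and using that \eqref{eq:plandiv} holds for every $\sigma<\frac12$ at once.

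Finally, \eqref{eq:omega3} splits into two cases. If $\theta_{f,a,q}<1$, every pole has $\min(\beta,1-\beta)\ge 1-\theta_{f,a,q}>0$, so for small $\delta>0$ the summand of \eqref{eq:plandiv} with $\sigma=\frac12-\delta$ is $\ll(2+|\gamma|)^{-c_0}\log^2(2+|\gamma|)$ with $c_0=\frac13(1-\theta_{f,a,q})-\delta>0$; divergence then forces $\limsup_{T\to\infty}\frac{\log\Ns_{f,a,q}(T)}{\log T}\ge c_0$, hence $\Ns_{f,a,q}(T)\gg T^{c_0/2}$ for arbitrarily large $T$, far more than $\log\log\log T$. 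If $\theta_{f,a,q}=1$, then $\theta_{f,a,q}(T)\to 1$ and \eqref{eq:plandiv} forces poles with $\min(\beta,1-\beta)$ arbitrarily small at arbitrarily large heights; a quantitative version of the preceding estimate at the heights where such extremal poles first occur --- using the classical zero-free region to bound how far out they can lie --- produces $\Ns_{f,a,q}(T_n)\ge n$ along a sequence $T_n$ growing at most triply exponentially in $n$, i.e. $\Ns_{f,a,q}(T)\ge\log\log\log T$ for arbitrarily large $T$. The main obstacle is the passage from the qualitative divergence \eqref{eq:plandiv} to the sharp form of \eqref{eq:omega2}: the estimate $|\beta-\frac12|\le\theta_{f,a,q}(T)-\frac12$ is lossy precisely when poles approach $\Re(s)\in\{0,1\}$, and controlling that loss is what yields both the removal of $\varepsilon$ and the factor $(1-\theta_{f,a,q}(T))^{-1}$; the degenerate case $\theta_{f,a,q}=1$ of \eqref{eq:omega3} is the other delicate point.
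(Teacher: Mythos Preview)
Your treatment of $\theta_{f,a,q}\ge\frac12$ and of \eqref{eq:omega1} is correct and matches the paper's argument (you use a dyadic decomposition where the paper uses Riemann--Stieltjes integration by parts, but these are equivalent).

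For \eqref{eq:omega2} and \eqref{eq:omega3}, however, there is a genuine gap. You extract from the pole hypothesis only the \emph{qualitative} statement \eqref{eq:plandiv}, namely that the sum diverges for every $\sigma<\tfrac12$. But divergence of a series gives no control on the rate of growth of its partial sums, so ``coupling $\sigma$ to $T$'' cannot remove the $T^{-\varepsilon}$ loss or produce the factor $(1-\theta_{f,a,q}(T))^{-1}$: for instance, $\sum n^{-1}$ diverges at its abscissa, yet its partial sums grow only like $\log T$. The paper instead works on the \emph{other} side of the critical line: if $\rho$ is a pole of $H$ with $\Re\rho=\tfrac12$, then $|H(\rho+\delta)|\gg\delta^{-1}$ for small $\delta>0$, and since $I-H$ is holomorphic near $\rho$ this gives the quantitative inequality
\[
\delta^{-1}\ll 1+\sum_{\rho'}(2+|\gamma'|)^{\frac{\theta(|\gamma'|)-1}{3}-\delta}\log^2(2+|\gamma'|)
\]
at $\sigma=\tfrac12+\delta$. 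It is this $\delta^{-1}$ lower bound, not mere divergence, that drives both \eqref{eq:omega2} and \eqref{eq:omega3}. For \eqref{eq:omega2} one converts the sum to a Riemann--Stieltjes integral against $dN(t)$, integrates by parts (using that $\theta(t)$ is nondecreasing), and invokes the standard zero-free region $(1-\theta(t))^{-1}\ll\log t$ to see that the assumption $N(t)(1-\theta(t))\log^2 t\cdot t^{\frac13(\theta(t)-1)}\to 0$ makes the right-hand side $o(\delta^{-1})$, a contradiction.

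For \eqref{eq:omega3} your sketch in the case $\theta_{f,a,q}=1$ is not an argument; the zero-free region alone does not tell you that extremal poles accumulate fast enough to force triply-exponential growth of $T_n$. The paper's proof is a concrete gap argument: assuming $N(T)<\log\log\log T$ for all large $T$, one has $|\gamma_n|>\exp\exp\exp n$, and since $\log\tfrac{13}{5}<1$ there must exist infinitely many $m$ with $\log\log|\gamma_{m+1}|\ge\tfrac{13}{5}\log\log|\gamma_m|$. Choosing $\delta_m=(\log|\gamma_m|)^{-12/5}$, one estimates the terms $n\le m$ trivially and the tail $n>m$ by integration by parts to get $\delta_m^{-1}\ll 1+\delta_m^{-11/12}$, contradicting the $\delta^{-1}$ inequality above. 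None of this structure is present in your outline.
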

\begin{proof}
Let $\beta_n+i\gamma_n$ run through the poles of $\Delta_{f,a,q}^*(s)$,
in increasing order of $|\gamma_n|$. For brevity, we write $I(s)$,
$H(s)$, $\Theta$, $S(y)$, $N(t)$, $\theta(t)$ and $\theta$ for
$I_{f,a,q,\alpha}(s)$,
$H_{f,a,q,\alpha}(s)$, $\Theta(H_{f,a,q,\alpha})$, $S_{f,a,q}(y,\alpha)$,
$\Ns_{f,a,q}(t)$, $\theta_{f,a,q}(t)$ and $\theta_{f,a,q}$, respectively.
By Lemma~\ref{lem:Nathan}, we have
\begin{align*}
\int_0^{|\alpha|/4}|S(y)|y^{\sigma+\frac{k-1}2}\frac{dy}{y}
&\ll\sum_{n\ge 1}(2+|\gamma_n|)^{\frac{1+|\beta_n-\frac12|}{3}-\sigma}
\log^2(2+|\gamma_n|)\\
&\le\sum_{n\ge 1}(2+|\gamma_n|)^{\frac{\theta(|\gamma_n|)}{3}
+\frac16-\sigma}\log^2(2+|\gamma_n|).
\end{align*}
If $\Theta>0$ then by Proposition~\ref{prop:Mellin}, 
the integral must diverge for sufficiently small $\sigma>0$, and thus
the right-hand side has infinitely many terms. Thus
$\Delta_{f,a,q}^*(s)$ has poles, so $\theta\ge\frac12$.

Suppose that \eqref{eq:omega1} does not hold. Then there exists
$\varepsilon\in(0,\Theta)$ such that
$N(t)=o(t^{\frac13(1-\theta)+\Theta-\frac12-\varepsilon})$.
Choosing $\sigma=\Theta-\frac{\varepsilon}3$ and
using the estimate
$\log^2(2+|\gamma_n|)\ll(2+|\gamma_n|)^{\frac{\varepsilon}{3}}$,
we have
\begin{align*}
\int_0^{|\alpha|/4}|S(y)|y^{\Theta-\frac{\varepsilon}3+\frac{k-1}2}\frac{dy}{y}
&\ll\sum_{n\ge1}
(2+|\gamma_n|)^{\frac{\theta}{3}+\frac16-\Theta+\frac23\varepsilon}
\ll1+\int_1^\infty
t^{\frac{\theta}{3}+\frac16-\Theta+\frac23\varepsilon}\,dN(t)\\
&\ll1+\int_1^\infty
t^{\frac{\theta}{3}+\frac16-\Theta+\frac23\varepsilon-1}N(t)\,dt
\ll1+\int_1^\infty t^{-1-\frac{\varepsilon}3}\,dt
\ll1.
\end{align*}
By Proposition~\ref{prop:Mellin}, it follows that
$H(s)$ is holomorphic for $\Re(s)>\Theta-\frac{\varepsilon}3$.
This is a contradiction, so \eqref{eq:omega1} must hold.

Next suppose that $\Theta=\frac12$, and
let $\rho$ be a pole of $H(s)$ with
$\Re(\rho)=\frac12$. Then for sufficiently small $\delta>0$,
by Proposition~\ref{prop:Mellin}, we have
$$
\delta^{-1}\ll|H(\rho+\delta)|\ll1+|I(\rho+\delta)|
\le1+\int_0^{|\alpha|/4}|S(y)|y^{\delta+\frac{k}2}\frac{dy}{y},
$$
where we understand the right-hand side to be $\infty$ if the integral
diverges. Applying Lemma~\ref{lem:Nathan}, we thus have
\begin{equation}\label{eq:gammasum}
\delta^{-1}
\ll1+\sum_{n\ge 1}(2+|\gamma_n|)^{\frac{\theta(|\gamma_n|)-1}{3}-\delta}
\log^2(2+|\gamma_n|).
\end{equation}
In particular, the right-hand side must have infinitely many terms.
Applying integration by parts, we get
\begin{align*}
\delta^{-1}&\ll1+\int_1^\infty
t^{\frac{\theta(t)-1}{3}-\delta}\log^2{t}\,dN(t)
=1-\int_1^\infty N(t)d(t^{\frac{\theta(t)-1}{3}-\delta}\log^2{t})\\
&\le1+\int_1^\infty N(t)\bigl(\tfrac{1-\theta(t)}{3}+\delta\bigr)
t^{\frac{\theta(t)-1}{3}-\delta-1}\log^2{t}\,dt,
\end{align*}
where for the last inequality we have used the fact that $\theta(t)$ is
nondecreasing and
$$
d(t^{\frac{\theta(t)-1}{3}-\delta}\log^2{t})
=t^{\frac{\theta(t)-1}{3}-\delta-1}(\log{t})
\bigl[2-\bigl(\tfrac{1-\theta(t)}{3}+\delta\bigr)\log{t}\bigr]\,dt
+\tfrac13\log^3{t}\,d\theta(t).
$$

Suppose that \eqref{eq:omega2} is false, so that the function
$\varepsilon(t)=N(t)t^{\frac13(\theta(t)-1)}(1-\theta(t))\log^2{t}$
satisfies $\lim_{t\to\infty}\varepsilon(t)=0$. Then we have
$$
\delta^{-1}\ll1+\int_1^\infty
\left(\frac13+\frac{\delta}{1-\theta(t)}\right)
\varepsilon(t)t^{-1-\delta}\,dt.
$$
By the standard zero-free region \cite[Theorem~5.10]{IK04}, we have
$$
\frac1{1-\theta(t)}\ll\log\max(t,2),
$$
so that
$$
\delta^{-1}\ll1+\int_1^\infty(1+\delta\log{t})\varepsilon(t)t^{-1-\delta}\,dt
=1+\delta^{-1}\int_0^\infty\varepsilon(e^{u/\delta})(1+u)e^{-u}\,du
=o(\delta^{-1}).
$$
This is a contradiction, so \eqref{eq:omega2} holds.

Finally, suppose \eqref{eq:omega3} is false, so that $N(T)<\log\log\log{T}$
for all sufficiently large $T$. Then there exists $n_0\ge\Z_{>0}$ such that
$|\gamma_n|>\exp\exp\exp{n}$ for all $n\ge n_0$.
Since the terms from $n<n_0$ contribute a bounded amount to
\eqref{eq:gammasum}, we have
$$
1+\sum_{n=n_0}^\infty|\gamma_n|^{-\delta}\log^2|\gamma_n|
\gg\delta^{-1}
$$
for all sufficiently small $\delta>0$.

Next we claim that there are infinitely many $m\ge n_0$ such that
\begin{equation}\label{eq:biggap}
\log\log|\gamma_{m+1}|\ge\tfrac{13}{5}\log\log|\gamma_m|.
\end{equation}
If not then there exists $n_1\ge n_0$ such that \eqref{eq:biggap} fails
for all $m\ge n_1$, and by induction it follows that
$$
\log\log|\gamma_n|\le(\tfrac{13}{5})^{n-n_1}\log\log|\gamma_{n_1}|
=c(\tfrac{13}{5})^n
\quad\text{for }n\ge n_1,
$$
where $c=(13/5)^{-n_1}\log\log|\gamma_{n_1}|>0$. Hence,
$$
n<\log\log\log|\gamma_n|\le\log{c}+n\log\tfrac{13}{5}.
$$
Since $\log\frac{13}{5}<1$, this is false for sufficiently large $n$,
proving the claim.

Choose a large $m\ge n_0$ satisfying \eqref{eq:biggap}, and set
$\delta_m=(\log|\gamma_m|)^{-\frac{12}{5}}$.
Then using the trivial bound $e^{e^e}\le|\gamma_n|\le|\gamma_m|$ for
$n_0\le n\le m$, we have
$$
\sum_{n=n_0}^m|\gamma_n|^{-\delta_m}\log^2|\gamma_n|
\le m\log^2|\gamma_m|<(\log\log\log|\gamma_m|)\log^2|\gamma_m|
\le(\log|\gamma_m|)^{\frac{11}{5}}
=\delta_m^{-\frac{11}{12}},
$$
since $\log\log{x}\le x^{\frac15}$ for all $x>1$.

To estimate the contribution from $n>m$ we apply integration
by parts.  Set $g(t)=t^{-\delta_m}\log^2{t}$. Then $g'(t)<0$ for
$t>e^{2/\delta_m}=\exp(2(\log|\gamma_m|)^{12/5})$; in particular,
if $m$ is sufficiently large then, by \eqref{eq:biggap}, $g'(t)<0$
for $t\ge|\gamma_{m+1}|$. Hence,
we have
\begin{align*}
\sum_{n=m+1}^\infty g(|\gamma_n|)
&=\lim_{\varepsilon\to0^+}\int_{|\gamma_{m+1}|-\varepsilon}^\infty g(t)\,dN(t)
=\int_{|\gamma_{m+1}|}^\infty(-g'(t))(N(t)-m)\,dt\\
&\le\int_{|\gamma_{m+1}|}^\infty(-g'(t))(\log\log\log{t})\,dt
\le\delta_m\int_{|\gamma_{m+1}|}^\infty
t^{-\delta_m-1}(\log{t})^{\frac{11}{5}}\,dt\\
&=\delta_m\int_{\log|\gamma_{m+1}|}^\infty e^{-\delta_mu}u^{\frac{11}5}\,du.
\end{align*}
Applying integration by parts three times
and using that $\delta_m\log|\gamma_{m+1}|\gg1$, we get
$$
\delta_m\int_{\log|\gamma_{m+1}|}^\infty e^{-\delta_mu}u^{\frac{11}5}\,du
\ll|\gamma_{m+1}|^{-\delta_m}(\log|\gamma_{m+1}|)^{\frac{11}{5}}.
$$
Note that $\delta_m=(\log|\gamma_m|)^{-\frac{12}{5}}
\ge(\log|\gamma_{m+1}|)^{-\frac{12}{13}}$, so
$|\gamma_{m+1}|^{-\delta_m}\le\exp(-(\log|\gamma_{m+1}|)^{\frac1{13}})$.
Hence, we conclude that
$$
\sum_{n=m+1}^\infty g(|\gamma_n|)\ll
\exp(-(\log|\gamma_{m+1}|)^{\frac1{13}})(\log|\gamma_{m+1}|)^{\frac{11}{5}}
\ll 1.
$$

Thus, altogether we have
$$
\delta_m^{-1}\ll1+\sum_{n=n_0}^\infty|\gamma_n|^{-\delta_m}\log^2|\gamma_n|
\ll1+\delta_m^{-\frac{11}{12}}.
$$
This is false for sufficiently large $m$, so \eqref{eq:omega3} must hold
for some arbitrarily large $T$.
\end{proof}

\section{Proofs of Theorems~\ref{thm:twist} and \ref{thm:oddN}}
We begin with an overview of the argument. By
Proposition~\ref{NfaqTlowerbound}, $\Ns_{f,a,q}(T)$ is sometimes large
if there exists $\alpha\in\Q^\times$ for which $H_{f,a,q,\alpha}(s)$
has a pole with large real part. The main obstacle to showing this is
that $H_{f,a,q,\alpha}(s)$ is defined as the difference of two functions
(cf.~\eqref{eq:Hdef}), whose poles could in principle cancel out. However,
as we show, there are some dependencies between $H_{f,a,q,\alpha}(s)$
for various choices of $(a,q,\alpha)$, from which it follows that
there is a suitable pole for at least one choice of inputs. More
specifically, in Lemma~\ref{lem:holo} we exhibit a relationship between
$H_{f,1,1,a/p}(s)$ and $H_{f,a,q,-a/q}(s)$, where $p$ and $q$ are primes
satisfying $pq\equiv-1\pmod*{Na}$. For any prime $p\nmid N$, we show that
there is some choice of $a\in\Z$ for which this leads to poles at the
simple zeros of $\Lambda_f(s)$, and thanks to \cite[Theorem~1.1]{Boo16},
those exist in abundance. Ultimately this implies that at least one of
$\Ns_f(T)$, $\Ns_{f,a,p}(T)$, $\Ns_{f,a,q}(T)$ is large, which yields
Theorem~\ref{thm:twist}.  Choosing $p=2$ and appealing to the second
and third conclusions of Proposition~\ref{NfaqTlowerbound} yields
Theorem~\ref{thm:oddN}.

Proceeding, given a prime $p$ and $a\in\Z$ coprime to $p$, define
\begin{equation}\label{eq:Cdef}
C_{f,a,p}(s)=\Delta_{f,a,p}(s)-\xi(p)p^{1-2s}\Delta_f(s).
\end{equation}
\begin{lemma}\label{lem:holo}
Let $a\in\Z$, and let $p$ and $q$ be prime numbers such that
$pq\equiv-1\pmod*{Na}$. Then
\begin{enumerate}
\item[(i)] $C_{f,a,p}(s)-\bigl(H_{f,1,1,a/p}(s)-\xi(p)p^{1-2s}H_{f,a,q,-a/q}(s)\bigr)$ is holomorphic for $\Re(s)>0$;
\item[(ii)] $\displaystyle{\sum_{b=1}^{p-1}C_{f,b,p}(s)}=-P_{f,p}(p^{1-s})\Delta_f(s)$.
\end{enumerate}
\end{lemma}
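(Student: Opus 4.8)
The plan is to dispatch (ii) first, since it follows directly from \eqref{Deltafaq}, and then to prove (i) by rewriting the three functions in \eqref{eq:Cdef}--\eqref{eq:Hdef} modulo functions holomorphic on $\{\Re(s)>0\}$.

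For (ii), combining \eqref{eq:Cdef} with \eqref{Deltafaq} (taking $q=p$) gives
$$
C_{f,b,p}(s)=\Big(1-\tfrac{p}{p-1}P_{f,p}(p^{-s})-\xi(p)p^{1-2s}\Big)\Delta_f(s)+\frac1{p-1}\sum_{\chi\ne\chi_0}\tau(\overline{\chi})\chi(b)\Delta_f(s,\chi).
$$
Summing over $b=1,\dots,p-1$, the terms with $\chi\ne\chi_0$ cancel by orthogonality, since $\sum_{b=1}^{p-1}\chi(b)=0$, leaving $\big((p-1)-pP_{f,p}(p^{-s})-(p-1)\xi(p)p^{1-2s}\big)\Delta_f(s)$; expanding $P_{f,p}(x)=1-\lambda_f(p)x+\xi(p)x^2$ shows the scalar factor equals $-\big(1-\lambda_f(p)p^{1-s}+\xi(p)p^{2-2s}\big)=-P_{f,p}(p^{1-s})$, which is the claim.

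For (i), write $g_1\equiv g_2$ to mean $g_1-g_2$ continues holomorphically to $\{\Re(s)>0\}$. I would first note that $R_{f,q}(x)$ is a rational function of $x$ with $R_{f,q}(0)=0$ whose poles lie at the roots of $P_{f,q}(x)$, which by the Ramanujan bound have absolute value $1$, so $R_{f,q}(q^{-s})$ is holomorphic on $\{\Re(s)>0\}$ (and similarly for $\bar{f}$). Then I would establish three reductions:
\begin{enumerate}
\item[(a)] Since $D_{f,a,p}(s)=D_f(s,a/p)-R_{f,p}(p^{-s})L_f(s)$, multiplying by $\Gamma_\C(s+\tfrac{k-1}2)$ gives $\Delta_{f,a,p}(s)=\Delta_f(s,a/p)-R_{f,p}(p^{-s})\Lambda_f(s)$, so $C_{f,a,p}(s)\equiv\Delta_f(s,a/p)-\xi(p)p^{1-2s}\Delta_f(s)$.
\item[(b)] Because $R_{f,q}(0)=0$, the Dirichlet series $R_{f,q}(q^{-s})L_f(s)$ is supported on integers divisible by $q$, hence is unchanged by the twist $e(-an/q)$, which also returns $D_f(s,a/q)$ to $D_f(s)$; thus $D_{f,a,q}(s,-a/q)=D_f(s)-R_{f,q}(q^{-s})L_f(s)$ and $\Delta_{f,a,q}(s,-a/q)\equiv\Delta_f(s)$.
\item[(c)] In $\Delta_{\bar{f},-\overline{Na},q}(s,-\tfrac1{Nq^2\cdot(-a/q)})=\Delta_{\bar{f},-\overline{Na},q}(s,\tfrac1{Nqa})$, the additive phase in the $D_{\bar{f}}$-part is $-\tfrac{\overline{Na}}q+\tfrac1{Nqa}=\tfrac{1-Na\overline{Na}}{Nqa}=-\tfrac m{Na}$ with $m=\tfrac{Na\overline{Na}-1}q\in\Z$ satisfying $qm\equiv-1\equiv pq\pmod{Na}$, hence $m\equiv p\pmod{Na}$, so this part equals $D_{\bar{f}}(s,-p/(Na))$. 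The complementary contribution from $R_{\bar{f},q}(q^{-s})L_{\bar{f}}(s)$ twisted by $\tfrac1{Nqa}$ equals $\sum_{j\ge1}r_jq^{-js}L_{\bar{f}}(s,q^{j-1}/(Na))$, where $R_{\bar{f},q}(q^{-s})=\sum_{j\ge1}r_jq^{-js}$ (the expansion starting at $j=1$ as $R_{\bar{f},q}(0)=0$); this converges locally uniformly on $\{\Re(s)>0\}$, since $|r_jq^{-js}|$ decays geometrically (up to a polynomial factor) and only finitely many entire functions $L_{\bar{f}}(s,q^{j-1}/(Na))$ occur, so it is holomorphic there. Hence $\Delta_{\bar{f},-\overline{Na},q}(s,\tfrac1{Nqa})\equiv\Delta_{\bar{f}}(s,-p/(Na))$.
\end{enumerate}
Feeding (a)--(c) into $C_{f,a,p}(s)-\big(H_{f,1,1,a/p}(s)-\xi(p)p^{1-2s}H_{f,a,q,-a/q}(s)\big)$, the $\Delta_f(s,a/p)$ and $\xi(p)p^{1-2s}\Delta_f(s)$ terms cancel; since $Nq^2\alpha^2$ equals $Na^2/p^2$ for $\alpha=a/p$ and $Na^2$ for $\alpha=-a/q$, while $(i\sgn\alpha)^k$ differs by a factor $(-1)^k$ between the two, the remaining coefficient of $\Delta_{\bar{f}}(s,-p/(Na))$ is $\epsilon(Na^2)^{s-\frac12}p^{1-2s}(i\sgn a)^k\big(1-\xi(p)\xi(q)(-1)^k\big)$. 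Because $pq\equiv-1\pmod{Na}$ forces $-pq\equiv1\pmod N$, the relation $\xi(-1)=(-1)^k$ for the nebentypus gives $\xi(p)\xi(q)(-1)^k=\xi(-pq)=1$, so this coefficient vanishes, and what remains is a combination of $R_{f,p}(p^{-s})\Lambda_f(s)$, $R_{f,q}(q^{-s})\Lambda_f(s)$ and $\Gamma_\C(s+\tfrac{k-1}2)$ times the series of (c), each holomorphic on $\{\Re(s)>0\}$.

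The main obstacle is the bookkeeping of the root number $\epsilon$ and the sign and archimedean constants — the powers $p^{1-2s}$, $(Nq^2\alpha^2)^{s-\frac12}$ and the factors $(i\sgn\alpha)^k$ — so that the coefficient of $\Delta_{\bar{f}}(s,-p/(Na))$ collapses exactly as above, together with the routine-but-nonempty checks that the $R$-correction terms in (a)--(c) are holomorphic on $\{\Re(s)>0\}$ (the crucial input being $R_{f,q}(0)=R_{\bar{f},q}(0)=0$). The idea worth isolating is that $pq\equiv-1\pmod{Na}$ does double duty: in (c) it makes the additive twist on both sides the common $\Delta_{\bar{f}}(s,-p/(Na))$ (via $m\equiv p\pmod{Na}$), and it simultaneously produces the vanishing $\xi(-pq)=1$ that is exactly what forces the $\Delta_{\bar{f}}(s,-p/(Na))$-contributions to cancel.
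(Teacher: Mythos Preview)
Your proof is correct and follows essentially the same approach as the paper. For (ii) the arguments are identical; for (i) you organize the computation into the three reductions (a)--(c) while the paper computes $H_{f,a,q,-a/q}$ first and then $C_{f,a,p}-H_{f,1,1,a/p}$, but the content is the same, and your direct convergence argument for the twisted $R_{\bar f,q}$-series (periodicity of $q^{j-1}\pmod{Na}$ plus geometric decay of $r_jq^{-js}$) is just a repackaging of the paper's Fourier-inversion grouping modulo $\varphi(Na)$.
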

\begin{proof}
We first consider $H_{f,a,q,\alpha}(s)$, where $\alpha=-a/q$.
We have
$$
\Delta_{f,a,q}(s,\alpha)-\Delta_f(s)
=-R_{f,q}(q^{-s})\Lambda_f(s),
$$
which is holomorphic for $\Re(s)>0$.
Set $a'=-\frac{1+pq}{Na}$, so that
$\frac{a'}{q}-\frac1{Nq^2\alpha}=-\frac{p}{Na}$.
Let $r_{\bar{f},q}(j)$ be the numbers such that
$$
R_{\bar{f},q}(x)=\sum_{j=1}^\infty r_{\bar{f},q}(j)x^j.
$$
By Fourier inversion, we have
$$
\sum_{\substack{j\ge1\\j\equiv{t}\;(\text{mod }\varphi(Na))}}
r_{\bar{f},q}(j)x^j
=\frac1{\varphi(Na)}\sum_{\ell=1}^{\varphi(Na)}
e\!\left(-\frac{\ell{t}}{\varphi(Na)}\right)
R_{\bar{f},q}\!\left(e\!\left(\frac\ell{\varphi(Na)}\right)x\right).
$$
Thus,
\begin{align*}
\Delta_{\bar{f},a',q}&\!\left(s,-\frac1{Nq^2\alpha}\right)
-\Delta_{\bar{f}}\!\left(s,-\frac{p}{Na}\right)
=-\sum_{j=1}^\infty r_{\bar{f},q}(j)q^{-js}
\Lambda_{\bar{f}}\!\left(s,\frac{q^{j-1}}{Na}\right)\\
&=-\sum_{t=1}^{\varphi(Na)}\Lambda_{\bar{f}}\!\left(s,\frac{q^{t-1}}{Na}\right)
\sum_{\substack{j\ge1\\j\equiv{t}\;(\text{mod }\varphi(Na))}}
r_{\bar{f},q}(j)q^{-js}\\
&=-\frac1{\varphi(Na)}
\sum_{t=1}^{\varphi(Na)}\Lambda_{\bar{f}}\!\left(s,\frac{q^{t-1}}{Na}\right)
\sum_{\ell=1}^{\varphi(Na)}
e\!\left(-\frac{\ell{t}}{\varphi(Na)}\right)
R_{\bar{f},q}\!\left(e\!\left(\frac\ell{\varphi(Na)}\right)q^{-s}\right),
\end{align*}
which is again holomorphic for $\Re(s)>0$.
Hence, up to a holomorphic function, $H_{f,a,q,\alpha}(s)$ is
$$
\Delta_f(s)-\epsilon\xi(q)(-i\sgn{a})^k(Na^2)^{s-\frac12}
\Delta_{\bar{f}}\!\left(s,-\frac{p}{Na}\right).
$$

Next note that
$$
C_{f,a,p}(s)-H_{f,1,1,a/p}(s)=
\epsilon(i\sgn{a})^k\left(\frac{Na^2}{p^2}\right)^{s-\frac12}
\Delta_{\bar{f}}\!\left(s,-\frac{p}{Na}\right)
-\xi(p)p^{1-2s}\Delta_f(s)-R_{f,q}(q^{-s})\Lambda_f(s).
$$
Therefore, since $\xi(p)\xi(q)=\xi(-1)=(-1)^k$, we see that
$$
C_{f,a,p}(s)-H_{f,1,1,a/p}(s)+\xi(p)p^{1-2s}H_{f,a,q,\alpha}(s)
$$
is holomorphic for $\Re(s)>0$.

Finally, by \eqref{Deltafaq} we have
\begin{align*}
\sum_{b=1}^{p-1}C_{f,b,p}(s)
&=(p-1)\left[1-\frac{p}{p-1}P_{f,p}(p^{-s})-\xi(p)p^{1-2s}\right]\Delta_f(s)\\
&=-P_{f,p}(p^{1-s})\Delta_f(s).
\end{align*}
\end{proof}

In the following we shall make frequent use of the observation that for
any pair $h_1,h_2$ of meromorphic functions,
\begin{equation}\label{eq:thetah1h2}
\Theta(h_1+h_2)\le\max(\Theta(h_1),\Theta(h_2)),
\quad\text{with equality when }\Theta(h_1)\ne\Theta(h_2).
\end{equation}

Fix a prime $p\nmid N$.
By \cite[Theorem~1.1]{Boo16} and the functional equation,
$\Delta_f(s)$ has a pole with real part $\ge\frac12$, and thus
\begin{equation}\label{thetaf11lb}
\Theta(\Delta_f)=\theta_{f,1,1}\ge\frac12.
\end{equation}
Since all zeros of $P_{f,p}(p^{1-s})$ have real part $1$,
this is also true of
$P_{f,p}(p^{1-s})\Delta_f(s)$. Hence, by
Lemma~\ref{lem:holo}(ii), there exists
$a\in\{1,\ldots,p-1\}$ such that
$C_{f,a,p}(s)$ has a pole with real part $\ge\frac12$ and
satisfies $\Theta(C_{f,a,p})\ge\theta_{f,1,1}$.
By \eqref{eq:Cdef} and \eqref{eq:thetah1h2}, it follows that
\begin{equation}\label{thetaCfap}
\Theta(C_{f,a,p})=\max(\Theta(\Delta_{f,a,p}),\theta_{f,1,1}).
\end{equation}
Let $q$ be a prime satisfying $pq\equiv-1\pmod*{Na}$, and set
$a'=-(1+pq)/(Na)$.

We aim to prove that
\begin{equation}\label{eq:summary}
\max\bigl(\Ns_f(T),\Ns_{f,a,p}(T),\Ns_{f,a,q}(T)\bigr)
=\Omega\bigl(T^{\frac16-\varepsilon}\bigr)
\quad\text{for all }\varepsilon>0.
\end{equation}
To that end, we will show that at least one of the following
inequalities holds for some $\alpha\in\Q^\times$:
\begin{itemize}
\item[(i)]
$\max(\Theta(H_{f,1,1,\alpha}),\Theta(H_{\bar{f},1,1,\alpha}))
\ge\theta_{f,1,1}\ge\frac12$;
\item[(ii)]
$\max(\Theta(H_{f,a,p,\alpha}),\Theta(H_{\bar{f},a',p,\alpha}))
\ge\theta_{f,a,p}\ge\frac12$;
\item[(iii)]
$\max(\Theta(H_{f,a,q,\alpha}),\Theta(H_{\bar{f},a',q,\alpha}))
\ge\theta_{f,a,q}\ge\frac12$.
\end{itemize}
To see that this suffices, suppose for instance that (iii) holds.
By Proposition~\ref{voronoi}, we have $\Ns_{f,a,q}(T)=\Ns_{\bar{f},a',q}(T)$
and $\theta_{f,a,q}=\theta_{\bar{f},a',q}$.
Thus, applying Proposition~\ref{NfaqTlowerbound} to either $(f,a,q)$ or
$(\bar{f},a',q)$, we conclude that
$$
\Ns_{f,a,q}(T)=\Omega(T^{\beta-\varepsilon}),
\quad\text{where }
\beta\ge\frac13(1-\theta_{f,a,q})+\theta_{f,a,q}-\frac12
=\frac{2\theta_{f,a,q}}{3}-\frac16\ge\frac16.
$$
If, instead, (i) or (ii) holds, then by a similar argument we find that
$\Ns_{f,1,1}(T)=\Omega(T^{\beta-\varepsilon})$
or $\Ns_{f,a,p}(T)=\Omega(T^{\beta-\varepsilon})$
for some $\beta\ge\frac16$.
Hence, \eqref{eq:summary} follows in any case.

Let us suppose that conditions (i) and (iii) are false for all
$\alpha\in\Q^\times$ and show that this leads to (ii).
Since (i) is false, in view of \eqref{thetaf11lb} we must have
$\theta_{f,1,1}>\Theta(H_{f,1,1,a/p})$.  In turn, by \eqref{thetaCfap}
this implies that $\Theta(C_{f,a,p})>\Theta(H_{f,1,1,a/p})$.  Hence,
by Lemma~\ref{lem:holo}(i) and \eqref{eq:thetah1h2}, we have
$\Theta(H_{f,a,q,-a/q})=\Theta(C_{f,a,p})$. By \eqref{thetaCfap}, this
implies $\Theta(H_{f,a,q,-a/q})\ge\theta_{f,1,1}>0$,
and thus $\theta_{f,a,q}\ge\frac12$, by
Proposition~\ref{NfaqTlowerbound}.

Next, by \eqref{thetafaq} we have
$\theta_{f,a,p}=\max(\Theta(\Delta_{f,a,p}),\Theta(\Delta_{\bar{f},a',p}))$.
If
\begin{equation}\label{cond1}
\Theta(\Delta_{\bar{f},a',p})
\le\max(\Theta(\Delta_{f,a,p}),\theta_{f,1,1})
=\Theta(H_{f,a,q,-a/q})
\end{equation}
then it follows that
\begin{equation}\label{ThetaHf}
\Theta(H_{f,a,q,-a/q})=\max(\theta_{f,a,p},\theta_{f,1,1}).
\end{equation}

Suppose now that \eqref{cond1} is false. Then
$\Theta(\Delta_{\bar{f},a',p})
>\max(\Theta(\Delta_{f,a,p}),\theta_{f,1,1})$, so that
$$
\theta_{f,a,p}=\Theta(\Delta_{\bar{f},a',p})>\theta_{f,1,1}.
$$
Since (i) is false, this implies that
$\Theta(\Delta_{\bar{f},a',p})
>\max(\Theta(H_{\bar{f},1,1,a'/p}),\theta_{f,1,1})$. By
\eqref{eq:Cdef} and Lemma~\ref{lem:holo}(i) with $(\bar{f},a')$
in place of $(f,a)$, it follows from \eqref{eq:thetah1h2} that
\begin{equation}\label{ThetaHbarf}
\Theta(H_{\bar{f},a',q,-a'/q})=\Theta(\Delta_{\bar{f},a',p})
=\max(\theta_{f,a,p},\theta_{f,1,1}).
\end{equation}

Therefore,
since at least one of \eqref{ThetaHf} and \eqref{ThetaHbarf} must hold,
we have
$$
\max(\Theta(H_{f,a,q,-a/q}),\Theta(H_{\bar{f},a',q,-a'/q}))
\ge\max(\theta_{f,a,p},\theta_{f,1,1}).
$$
Since (iii) is false, this implies that
$\theta_{f,a,q}>\max(\theta_{f,a,p},\theta_{f,1,1})$.
Hence, by \eqref{thetafaq}, either
\begin{equation}\label{branch}
\Theta(\Delta_{f,a,q})
>\max(\theta_{f,a,p},\theta_{f,1,1})
\quad\text{or}\quad
\Theta(\Delta_{\bar{f},a',q})
>\max(\theta_{f,a,p},\theta_{f,1,1}).
\end{equation}

Suppose that the first inequality in \eqref{branch} holds. Then
by \eqref{eq:Cdef}
(with $q$ in place of $p$) and \eqref{eq:thetah1h2}, we
have $\Theta(C_{f,a,q})=\Theta(\Delta_{f,a,q})>\theta_{f,1,1}$.
Since (i) is false, this implies
$\Theta(C_{f,a,q})>\Theta(H_{f,1,1,a/q})$.
On the other hand, by Lemma~\ref{lem:holo}(i)
(with the roles of $p$ and $q$ reversed)
and \eqref{eq:thetah1h2}, we have
$$\Theta(H_{f,a,p,-a/p})=\Theta(C_{f,a,q})
=\Theta(\Delta_{f,a,q})>\theta_{f,a,p}.$$
This also implies that $\Theta(H_{f,a,p,-a/p})>0$, whence
$\theta_{f,a,p}\ge\frac12$, by Proposition~\ref{NfaqTlowerbound}.

If, instead, the second inequality holds in \eqref{branch},
then running through the same argument with $(\bar{f},a')$
in place of $(f,a)$, we find that
$$\Theta(H_{\bar{f},a',p,-a'/p})=\Theta(C_{\bar{f},a',q})
=\Theta(\Delta_{\bar{f},a',q})>\theta_{\bar{f},a',p}\ge\frac12.
$$
Hence, in either case we see that (ii) holds, and this
concludes the proof of \eqref{eq:summary}.

Now, by \eqref{eq:summary} and \eqref{Deltafaq}, it follows that
there is a character $\chi$ of conductor $1$, $p$ or $q$ such
that $\Ns_{f\otimes\chi}(T)=\Omega(T^{\frac16-\varepsilon})$
for all $\varepsilon>0$. This implies Theorem~\ref{thm:twist}.

For the proof of Theorem~\ref{thm:oddN}, we may assume that
$\Ns_f(T)\ll1+T^\varepsilon$ for all $\varepsilon>0$, since
the result is trivial otherwise. To avoid contradicting
Proposition~\ref{NfaqTlowerbound},
it must therefore be the case that
$\max(\Theta(H_{f,1,1,\alpha}),\Theta(H_{\bar{f},1,1,\alpha}))
\le\frac12$ for all $\alpha\in\Q^\times$.

Since $N$ is odd, we can take $p=2$ and $a=1$ in the above, and choose
any suitable prime $q$. Then by Lemma~\ref{lem:holo}(ii), we have
$$
\Delta_{f,a,p}(s)=\bigl(\xi(p)p^{1-2s}-P_{f,p}(p^{1-s})\bigr)\Delta_f(s),
$$
and it follows that $\Ns_{f,a,p}(T)\le\Ns_f(T)$ and
$\max(\Theta(H_{f,a,p,\alpha}),\Theta(H_{\bar{f},a',p,\alpha}))
\le\frac12$ for all $\alpha\in\Q^\times$.  Thus, by \eqref{eq:summary},
$\Ns_{f,a,q}(T)=\Omega(T^{\frac16-\varepsilon})$ for all
$\varepsilon>0$.  Therefore, by Proposition~\ref{voronoi}, at least one
of $\Delta_{f,a,q}(s),\Delta_{\bar{f},a',q}(s)$ has a pole in the region
$\{s\in\C:\Re(s)\ge\frac12\}$ that is not a pole of $\Delta_f(s)$. By
\eqref{eq:Cdef} and Lemma~\ref{lem:holo}(i), the same applies to one
of $H_{f,1,1,a/q}(s)$, $H_{f,a,p,-a/p}(s)$, $H_{\bar{f},1,1,a'/q}(s)$,
or $H_{\bar{f},a',p,-a'/p}(s)$.

Since
$$
\Ns_{f,a,p}(T)=\Ns_{\bar{f},a',p}(T)\le
\Ns_{f,1,1}(T)=\Ns_{\bar{f},1,1}(T),
$$
whichever function has the pole, we can apply Proposition~\ref{NfaqTlowerbound}
to see that $\Ns_f(T)$ satisfies the second and third conclusions.
In particular, $\Ns_f(T)\ge\log\log\log{T}$
for some arbitrarily large $T$, and
if $k=1$ or $f$ is a CM form then Coleman's theorem \cite{Col90}
implies that
$$
1-\theta_{f,a,p}(T)\ge
1-\theta_{f,1,1}(T)\gg
(\log{T})^{-\frac23}(\log\log{T})^{-\frac13}
\quad\text{for all }T\ge3,
$$
whence $\Ns_f(T)=\Omega(\exp((\log{T})^{\frac13-\varepsilon}))$
for all $\varepsilon>0$.
Moreover, since $\Ns_f(T)\ll1+T^\varepsilon$, we must have
$\theta_{f,1,1}=1$, so $\Lambda_f(s)$ has simple zeros with real part
arbitrarily close to $1$.

Finally, by Lemma~\ref{lem:holo}(ii)
we have $\Theta(C_{f,a,p})=1$.
Since $\Theta(H_{f,1,1,a/p})\le\frac12$,
Lemma~\ref{lem:holo}(i) and \eqref{eq:thetah1h2} imply that
$\Theta(H_{f,a,q,-a/q})=1$.
Applying Proposition~\ref{NfaqTlowerbound}, it follows that
$\Ns_{f,a,q}(T)=\Omega(T^{\frac12-\varepsilon})$ for all
$\varepsilon>0$.
By Lemma~\ref{lem:holo}(i), $C_{f,a,q}(s)$ and $C_{\bar{f},a',q}(s)$ are
holomorphic for $\Re(s)>\frac12$.
Hence, by \eqref{eq:Cdef} and Proposition~\ref{voronoi}, all poles of
$\Delta_{f,a,q}^*(s)$ that are not poles of $\Delta_f^*(s)$
lie on the line $\{s\in\C:\Re(s)=\frac12\}$.
Since $\Ns_f(T)\ll1+T^\varepsilon$, $\Delta_{f,a,q}^*(s)$ must have
$\Omega(T^{\frac12-\varepsilon})$ poles with real part $\frac12$ and
imaginary part in $[-T,T]$. By \eqref{Deltafaq}, the same applies to
$\Delta_f(s,\chi)$ for some $\chi\pmod*{q}$.

\bibliographystyle{amsalpha}
\bibliography{simplezeros}
\end{document}